\numberwithin{equation}{section} 
\numberwithin{figure}{section} 
\theoremstyle{plain}
\newtheorem{thm}{Theorem}
  \theoremstyle{plain}
  \newtheorem{cor}{Corollary}[section]
  \theoremstyle{plain}
  \newtheorem{lem}[cor]{Lemma}
  \theoremstyle{plain}
    \newtheorem{prop}[cor]{Proposition}
    \theoremstyle{remark}
       \newtheorem{rem}[cor]{Remark}
	 \theoremstyle{plain}
\theoremstyle{plain}
  \newtheorem{defn}[thm]{Definition}
\renewcommand{\phi}{\varphi}
\renewcommand{\emptyset}{\varnothing}
\def\R{\mathbb{R}}
\def\N{\mathbb{N}}
\def\Z{\mathbb{Z}}
\def\a{\alpha}
\def\l{\lambda}
\def\b{\beta}
\def\om{\omega}
\newcommand{\var}{\varepsilon}
\newtheorem{example}{Example}
\numberwithin{equation}{section} 
\numberwithin{figure}{section} 
  \theoremstyle{plain}
  \theoremstyle{plain}
\begin{document}

\title[Hausdorff dimension spectrum of CGDMS]{The Hausdorff dimension spectrum of  Conformal Graph Directed Markov Systems and applications to Nearest Integer continued fractions}
\author{A. Ghenciu, S. Munday, M. Roy}

\begin{abstract}
In this paper, we consider two dynamical systems associated to the nearest integer continued fraction, and show that both of them have full Hausdorff dimension spectrum.
\end{abstract}

\maketitle

\section{Introduction and statement of results}

It is well known that every irrational real number $x$ can be written uniquely as an infinite fraction
\begin{eqnarray}x=a_0+\frac{1}{a_1 + \frac{1}{a_2+\frac{1}{a_3 + \cdots}}},\end{eqnarray} where $a_0\in \Z$ and each  $a_i\in \N$, for $i\geq1$. This is the regular continued fraction expansion of $x$. This classical and extremely well-studied expansion is far from being the only interesting one that has been introduced. Another class of expansions, a generalisation of the regular continued fraction (RCF), are the semi-regular continued fraction (SRCF) expansions. These are expansions which improve the approximation properties of the regular continued fraction (for more information on this see \cite{DK} and references therein) and they are defined as follows. A SRCF expansion is a finite or infinite fraction
\[
[b_0; \var_1b_1, \var_2b_2, \var_3b_3, \ldots]:=b_0+\frac{\var_1}{b_1+\frac{\var_2}{b_2+\frac{\var_3}{b_3+\ldots}}},
\]
with $\var_n=\pm1$, $b_0\in \Z$ and $b_n\in \N$ for all $n\geq1$, subject to the conditions that  $\var_{n+1}+b_n\geq1$ for all $n\geq1$, and, if the fraction is infinite, we have infinitely often that $\var_{n+1}+b_n\geq2$.

In this paper, we are interested in a particular example of a SCRF, namely, the nearest integer continued fraction (NICF). This expansion was introduced by Minnigerode in 1873 \cite{min73}, and has been studied quite intensively by several authors, starting with Hurwitz \cite{hur89}. The NICF is a SRCF satisfying $b_n\geq2$ and $b_n+\var_{n+1}\geq2$ for all $n\geq1$. The NICF is intimately related to the regular continued fraction, via the process of singularization, which we now describe (see \cite{Kra} for more details and further references). First, for any two positive integers $a$ and $b$, and $\xi \in (0, 1)$, observe that
\[
a+\frac{1}{1+\frac{1}{b+\xi}} = (a+1)+\frac{-1}{b+1+\xi}.
\]
Then, if we have a SRCF expansion
\begin{eqnarray}\label{star1}
[b_0; \var_1 b_1, \var_2 b_2, \var_3 b_3, \ldots]
\end{eqnarray}
with $b_{k+1}=\var_{k+1}=\var_{k+2} = 1$ for some $k\geq0$, we can replace (\ref{star1}) by
\begin{eqnarray}\label{star2}
[b_0; \var_1 b_1, \var_2 b_2, \ldots, \var_{k-1}b_{k-1}, \var_k (b_{k}+1), -(b_{k+2}+1), \var_{k+3}b_{k+3}, \ldots].
\end{eqnarray}
Now consider the RCF expansion of an irrational number $x$ and the following algorithm. Suppose that we have $a_{n+1}=\cdots = a_{n+m}=1$, for $m\in \N\cup\{\infty\}$, $n\geq0$, $a_{n+m+1}\neq1$ and $a_n\neq1$ (assuming $n>0$). Then singularize $a_{n+1}$, $a_{n+3}$, $a_{n+5}$, and so on, in turn. One immediately verifies that the expansion obtained in this way is the NICF expansion of $x$. Notice that this implies, in particular, that every irrational number admits an infinite NICF expansion. Moreover, this expansion is unique.

Let $[b_0; \var_1 b_1, \var_2 b_2, \ldots]$ be an infinite SRCF (we ignore finite expansions from here on, as they are only countably many). Then it is shown in \cite[Theorem 1.7]{Kra} that  there exist sequences $(p_n)_{n\geq-1}$ and $(q_n)_{n\geq-1}$ in $\Z$ that satisfy the recurrence relations
\[
\left\{
  \begin{array}{ll}
    p_{-1}:=1,\ \ \  p_0:=b_0 ,& \hbox{$p_n=b_np_{n-1}+\var_np_{n-1}$,} \\
    q_{-1}:=0,\ \ \  q_0:=1, & \hbox{$q_n=b_nq_{n-1}+\var_nq_{n-1}$.}
  \end{array}
\right.
\]
It is also shown that for all $n\geq-1$, we have $\mathrm{gcd}(p_n, q_n)=1$ and $\mathrm{gcd}(q_{n}, q_{n+1})=1$. Then, if for each $n\geq0$ we define $p_n/q_n:=[b_0; \var_1 b_1, \ldots, \var_n b_n]$,  the continued fraction  $[b_0; \var_1 b_1, \var_2 b_2, \ldots]$ is said to be convergent if and only if $\lim_{n\to\infty}p_n/q_n$ exists and is finite. It turns out that every SRCF converges to an irrational number (see \cite{Kra} again, and references therein), so it makes sense to refer to $(p_n/q_n)_{n\geq-1}$ as the sequence of convergents to the number $x= [b_0; \var_1 b_1, \var_2 b_2, \ldots]$. For the NICF example, we have that $|q_{n-1}|\leq |q_n|$ for all $n\geq1$ (see Corollary 1.9 in \cite{Kra}).

Much of the work done on the NICF has concentrated on its Diophantine approximation properties (see, for instance, \cite{jag85}, \cite{jagkra}, \cite{rock80}). We instead will focus on the question of its Hausdorff dimension spectrum, which we define shortly below. For this, it will be helpful to have a more  dynamical representation of the NICF.
Let $[\cdot]$ denote the integer part function. Then the nearest integer continued
fraction expansions are determined by the discontinuous transformation
$T:[-1/2,1/2]\to[-1/2,1/2]$ which is defined by setting
\[
T(x):=\left\{
        \begin{array}{ll}
          \frac{1}{x}-\left[\frac{1}{x}-\frac{1}{2}\right], & \hbox{if $x\neq0$;} \\
          0, & \hbox{if $x=0$.}
        \end{array}
      \right.
\]
By ``determined by'', we mean that the digits of the NICF can be found using the map $T$ as follows: For all $n\geq1$,
\[
b_n=b_n(x)=\left[\frac{1}{T^{n-1}(x)}\right].
\]
Note that the digits are now integers, instead of natural numbers coupled with a sign. That is, our $b_n$ generated by the map $T$ is equal to $\var_nb_n$ from above.
The inverse branches of $T$ are the conformal\footnote{Recall that a map is conformal is the derivative at every point is a rotation.} maps
\[
\varphi_b(x)=\frac{1}{b+x},\,\,\, |b|>1,
\]
whose respective domains are
\[
\varphi_2:\left[0,\frac{1}{2}\right]
          \to
          \left[\frac{2}{5},\frac{1}{2}\right]\subset\left[0,\frac{1}{2}\right],
\]
\[
\varphi_{-2}:\left[-\frac{1}{2},0\right]
             \to
             \left[-\frac{1}{2},-\frac{2}{5}\right]\subset\left[-\frac{1}{2},0\right],
\]
and
\[
\varphi_b:\left[-\frac{1}{2},\frac{1}{2}\right]
          \to
          \left[\frac{1}{b+1/2},\frac{1}{b-1/2}\right]
                       \subset
                       \left\{
                       \begin{array}{cll}
                       \left[0,\frac{1}{2}\right]  & \mbox{ if } & b>2 \\
                       \left[-\frac{1}{2},0\right]  & \mbox{ if } & b<-2
                       \end{array}
                       \right\}
                       \subset\left[-\frac{1}{2},\frac{1}{2}\right].
\]

Now,  let $E=\{b\in\Z:|b|\geq2\}$. Let $F\subset E$,
and let $J_F$ be the set of all numbers in $[-1/2,1/2]$ which can be represented by an infinite NICF with all digits belonging to the set $F$. If $F=E$, then
$J_E$ is the set of all irrational numbers in the interval $[-1/2,1/2]$. This set has Lebesgue
measure $1$. However, if $F$ is a proper subset of $E$, then the set $J_F$ has
Lebesgue measure $0$. Therefore, to distinguish between these sets, we use the Hausdorff dimension, which we will denote by $\dim_H(\cdot)$. (We will assume basic familiarity with properties of the Hausdorff dimension throughout, and refer to \cite{Fal}.)

The problem we are interested in is this: Given $0\leq t\leq1$, does there exist
a set $F\subset E$ such that $\dim_H(J_F)=t$? For the RCF expansion, this was an open problem for several
years, known as the Texan Conjecture. It was answered affirmatively for $0\leq t\leq1/2$
by Mauldin and Urba\'nski~\cite{mutr}. Later, it was answered positively for all $0\leq t\leq1$
by Kesseb\"ohmer and Zhu~\cite{KZ}. It is then said that the standard continued fraction expansion
has full Hausdorff dimension spectrum. Similar results were obtained by Ghenciu
for the backward continued fraction expansions~\cite{AGpara} and the Gauss-like continued
fraction expansions~\cite{G}. To solve these problems, these authors
associated to each continued fraction expansion an infinite conformal iterated function
system (cIFS), which, very briefly, is a finite or infinite set of conformal contracting similarities of a compact metric space.

In this paper, we consider questions related to the Hausdorff dimension spectrum of the NICF. The observant reader will have already spotted the main difficulty  - the NICF cannot be associated to an IFS, since the domains of the inverse branches of the map $T$ are not all the same space. To get around this problem, we need to introduce graph directed Markov systems. Then, there are two natural IFSs that can be associated to the NICF. The first is the IFS obtained by restricting the digits of the NICF to the set $F:=\{b\in\Z: |b|\geq3\}$, which we shall denote by $\Phi_F$.  The second is an IFS associated to one of the vertices of the graph directed Markov system we will use to describe the NICF; we will denote this IFS by $\Phi^{(v)}$, but for the details of how it is defined  we defer to Section 5. Our main results concern the dimension spectra of these two systems.

\begin{thm}\label{mainthm1}
$\Phi_F$ has full Hausdorff dimension spectrum.
\end{thm}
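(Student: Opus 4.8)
The plan is to realise $\Phi_F$ as an infinite conformal iterated function system and then apply the dimension-spectrum machinery of Mauldin--Urba\'nski together with the Texan-type arguments of Kesseb\"ohmer--Zhu. Concretely, the inverse branches $\varphi_b(x) = 1/(b+x)$ with $|b|\geq 3$ all have domain containing $[-1/2,1/2]$ (this is exactly why one throws out $b=\pm2$: those are the two branches whose images and the map itself live only on a half-interval), and they map $[-1/2,1/2]$ into itself with uniformly bounded contraction and bounded distortion, because $|\varphi_b'(x)| = 1/|b+x|^2 \le 1/(|b|-1/2)^2 \le 1/(5/2)^2 < 1$. So $\Phi_F = \{\varphi_b : |b|\ge 3\}$ is a genuine infinite cIFS on the compact interval $[-1/2,1/2]$, it satisfies the open set condition (with open set $(-1/2,1/2)$, using that the images of distinct branches have disjoint interiors, which follows from the monotonicity of $x\mapsto 1/(b+x)$ on each side), and it is regular. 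The first step is therefore to verify these properties carefully and to record that $J_F$ (the NICF limit set with digits restricted to $F$) coincides with the limit set of this cIFS.

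**Next**, I would invoke the variational/Bowen-type formula: for any $G\subseteq F$, $\dim_H(J_G) = \theta_G$, the unique zero of the pressure function $P_G(t) = \lim_{n} \frac1n \log \sum_{\omega \in G^n} \|\varphi_\omega'\|_\infty^t$ (finiteness of pressure for $t$ near the critical exponent is what regularity buys us). The goal is: for every $t\in[0,1]$ there is $G=G(t)\subseteq F$ with $\theta_G = t$. The endpoint $t=1$ is handled by $G = F$ itself, since $\dim_H(J_F) = 1$: indeed $J_F$ differs from the full NICF limit set only by the countably many points using a digit $\pm2$ together with a subset of positive-dimension, and more robustly, the single-digit subsystems $\{\varphi_N, \varphi_{N+1}, \ldots\}$ have dimension tending to $1$ as one includes more large digits (the tail $\sum_{|b|\ge N} |b|^{-2t}$ forces the zero of pressure up to $1$). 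For $t=0$ take $G$ a single map (dimension $0$). The substantive range is $0<t<1$.

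**For the intermediate values** the strategy is the two-parameter family construction of Kesseb\"ohmer--Zhu adapted to this system. One builds subsystems $G$ of the form $\{\varphi_b : b \in A\}$ where $A$ is an infinite subset of $F$ whose counting function we tune: by a continuity/monotonicity argument, $\theta_{\{b : N\le |b| \le M\}}$ increases continuously-enough in $M$ (for fixed large $N$) from a small value up to a value $<1$, and increasing $N$ lets the limiting dimension be made smaller; a diagonal selection of an infinite $A$ then hits any prescribed $t$. More precisely, I would first establish the Mauldin--Urba\'nski range $0\le t\le 1/2$ using \emph{finite} subsystems, where $\theta$ is continuous and takes all values in $[0, \dim_H(J_F \setminus \text{finite part}))$ already among finite systems and the supremum over finite subsystems of their dimensions equals $\dim_H(J_F)=1$ (this is the $\theta$-number/finiteness-of-pressure dichotomy); then push past $1/2$ via the refined argument of \cite{KZ}, which constructs infinite subsystems whose pressure can be controlled term-by-term. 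The key quantitative input is the asymptotic $\|\varphi_b'\|_\infty \asymp |b|^{-2}$, which makes the pressure series comparable to a zeta function $\sum |b|^{-2t}$ and hence completely transparent to compute with.

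**The main obstacle** I anticipate is not the existence of the cIFS — that is routine once $b=\pm2$ is excluded — but verifying the hypotheses needed to run the Kesseb\"ohmer--Zhu argument in the strong form that reaches all of $(1/2,1)$: one needs the precise continuity and intermediate-value behaviour of $t\mapsto \theta_{A}$ as $A$ ranges over a suitable lattice of infinite subsets, together with the bounded distortion property (BDP) holding \emph{uniformly} so that pressure of a subsystem is genuinely governed by $\sum_{b\in A}|b|^{-2t}$ up to multiplicative constants independent of $A$. Establishing that uniformity, and the accompanying claim that the dimension of the \emph{full} system $\Phi_F$ is exactly $1$ (so that no value in $[0,1]$ is missed at the top), is where the real work lies; the rest is bookkeeping with the Mauldin--Urba\'nski formalism for cGDMS recalled in the earlier sections.
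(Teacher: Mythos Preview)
Your outline points in the right direction---recognise $\Phi_F$ as an infinite cIFS on $[-1/2,1/2]$ and invoke the Kesseb\"ohmer--Zhu machinery---but it contains a substantive error and omits the actual mechanism of the proof.

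First, the error: you assert $\dim_H(J_F)=1$, but this is false. A direct computation gives
\[
Z_1(1)=2\sum_{b\ge 3}\|\varphi_b'\|=2\sum_{b\ge 3}\frac{1}{(b-\tfrac12)^2}=8\Bigl(\frac{\pi^2}{8}-1-\frac19\Bigr)\approx 0.98<1,
\]
so $P_F(1)\le \log Z_1(1)<0$ and hence $\dim_H(J_F)<1$. ``Full spectrum'' here means every value in $[0,\dim_H(J_F)]$ is realised, not every value in $[0,1]$; your endpoint discussion and the accompanying heuristics about tails forcing the dimension up to $1$ are therefore misplaced.

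Second, the gap: you describe the Kesseb\"ohmer--Zhu input as a ``two-parameter family construction'' with a ``diagonal selection'', but that is not what drives the argument. The paper isolates the operative criterion (Theorem~\ref{Mme}): order the alphabet, and for each letter $b$ exhibit constants $M_b,\{m_c\}_{c>b}$ with
\[
m_c\,\|\varphi_{\omega\overline\omega}'\|\le\|\varphi_{\omega c\overline\omega}'\|\le M_c\,\|\varphi_{\omega\overline\omega}'\|
\quad\text{and}\quad
M_b\le\sum_{c>b} m_c.
\]
The entire content of the proof is then producing these constants explicitly from the recursion for $q_n$. The lemmas of Section~3 show $|q_{n-1}/q_n|\le\alpha:=(3-\sqrt5)/2$ and deduce (via the function $G_\omega$) that one may take $M_b=\bigl(\tfrac32\cdot\tfrac{1}{|b|-\alpha}\bigr)^2$ and $m_b=\bigl(\tfrac23\cdot\tfrac{1}{|b|+\alpha}\bigr)^2$; Lemma~\ref{2.6} then verifies $M_k\le 2\sum_{j\ge k+1} m_j$ for $k\ge 4$ (the factor $2$ coming from the $\pm$ symmetry of the alphabet), and this feeds directly into Theorem~\ref{Mme}. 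Your proposal neither identifies this criterion nor carries out any of the estimates that make it go through; the ``uniform BDP'' concern you flag is real but is resolved precisely by the $q$-ratio bounds of Section~3, which your sketch does not mention.
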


\begin{thm}\label{mainthm2}
$\Phi^{(v)}$ has full Hausdorff dimension spectrum.
\end{thm}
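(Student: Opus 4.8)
The plan is to reduce the claim for $\Phi^{(v)}$ to a statement about a one-vertex subsystem that can be analysed by the same cIFS machinery used for $\Phi_F$ in Theorem \ref{mainthm1}. By definition (Section 5), $\Phi^{(v)}$ is the iterated function system sitting at one vertex $v$ of the graph directed Markov system coding the NICF, so its generators are compositions of the NICF inverse branches $\varphi_b(x)=1/(b+x)$ corresponding to admissible loops at $v$; in particular the one-step generators include all $\varphi_b$ with $|b|$ large whose images land in the fibre over $v$. First I would record that $\dim_H(J_{\Phi^{(v)}})=1$ when all digits are allowed: this is the statement that the full vertex set has Hausdorff dimension $1$, which follows because the NICF is measure-theoretically isomorphic (via singularization) to the RCF, whose limit set has full Lebesgue measure, hence full dimension. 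This gives the right-hand endpoint $t=1$ of the spectrum.

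Next I would establish continuity and an intermediate-value argument. For a finite or cofinite subalphabet $A$ at vertex $v$, write $\Phi^{(v)}_A$ for the induced subsystem and $\theta(A):=\dim_H(J_{\Phi^{(v)}_A})$. Using the Bowen-type pressure characterisation of Hausdorff dimension for conformal GDMS (and for the associated one-vertex cIFS), $\theta(A)$ is the zero of the topological pressure function $t\mapsto P_A(t)$, and this is monotone and continuous in $A$ under the inclusion order in the sense that $\dim_H(J_{\Phi^{(v)}_A})=\sup\{\dim_H(J_{\Phi^{(v)}_B}) : B\subset A,\ B \text{ finite}\}$ — this is the standard approximation property of cIFS limit sets by their finite subsystems. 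Combined with the fact that adding or removing a single generator changes the dimension continuously when the derivatives of that generator are uniformly small (a perturbation estimate on the pressure), this yields that the set of attainable values $\{\theta(A): A\subseteq \text{alphabet at } v\}$ is an interval. The lower endpoint is handled by taking $A$ to consist of a single generator $\varphi_b$ with $|b|$ enormous, whose contraction ratio is of order $|b|^{-2}$, so that $\theta(\{b\})\to 0$ as $|b|\to\infty$; hence the interval of attainable dimensions has infimum $0$.

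The one structural point requiring care, and the main obstacle, is that $\Phi^{(v)}$ is a \emph{vertex} subsystem of a GDMS rather than a free cIFS: the admissible words at $v$ are constrained by the incidence matrix, so the generators of $\Phi^{(v)}$ are genuine compositions $\varphi_{b_1}\circ\cdots\circ\varphi_{b_k}$ along loops, not single maps, and one must check that restricting the allowed digits still produces an irreducible (or at least well-behaved) finitely-primitive subsystem to which the pressure-zero dimension formula applies. Here I would use the explicit description of the NICF GDMS from Section 5 — in particular that the digit $b=2$ (respectively $b=-2$) is the only one forcing a change of fibre, so that loops at $v$ are precisely controlled by the positions of $\pm 2$ — to show that the sub-alphabets we need are cofinite and the induced loop systems remain finitely irreducible. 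Once that is in place, the argument above runs: by monotone approximation by finite subsystems, by the perturbation estimate on the pressure, and by pushing a single high-index generator's dimension down to $0$ and the full system's dimension up to $1$, every value $t\in[0,1]$ is realised as $\dim_H(J_{\Phi^{(v)}_A})$ for a suitable $A$, which is exactly the assertion that $\Phi^{(v)}$ has full Hausdorff dimension spectrum.
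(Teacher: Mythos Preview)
Your proposal has a genuine gap at the central step. You assert that ``adding or removing a single generator changes the dimension continuously when the derivatives of that generator are uniformly small'' and that this, together with monotone approximation, forces the set of attainable dimensions $\{\theta(A):A\subseteq\overline{E}^{(v)}\}$ to be an interval. But there is no continuous parameter here: the alphabet is countable, adding a letter causes a \emph{discrete} jump in dimension, and monotonicity under inclusion plus approximation by finite subsystems does not by itself imply that every intermediate value is hit. Indeed, this is exactly why the Texan Conjecture for the RCF was open for years --- the naive intermediate-value heuristic fails, and one must control the size of each jump against what can be recovered from the remaining letters. Your sketch never identifies a mechanism for doing this; the phrase ``perturbation estimate on the pressure'' is doing no actual work.

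What the paper does, and what you are missing, is the following. One fixes an explicit total ordering on the alphabet $\overline{E}^{(v)}$ and then verifies, letter by letter, the Kesseb\"ohmer--Zhu condition~(\ref{full}): for every letter $b$ and every set $F$ of earlier letters, $\lambda_{F\cup\{b\}}(t)\leq\lambda_{F\cup L_b}(t)$, where $L_b$ is the set of later letters. This is what guarantees that the jump caused by inserting $b$ can always be matched by inserting later letters instead, and it is what makes the spectrum full via Theorem~\ref{fullspec}. The verification is reduced (Theorem~\ref{Mme}) to checking inequalities of the form $M_b\leq\sum_{c\succ b}m_c$ between explicit upper and lower distortion constants, and this requires a careful case analysis: the letters $2^jk$ with $j>k$, with $1\leq j\leq k$, the single letters $k\geq6$, then $\pm5$, $\pm4$, and finally $\pm3$, each with progressively sharper distortion bounds (Lemmas~\ref{distortword}--\ref{lem_est_distort}). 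The small letters $\pm3,\pm4,\pm5$ are precisely where the naive bounds fail and one needs refined estimates and, for $\pm3$, a direct argument on $\lambda$. None of this structure appears in your outline, and without it the ``interval'' claim is unsupported.
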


The paper is organised as follows. In Section \ref{sec_prelim}, we will introduce much of the preliminary material needed for the rest of the paper, beginning with the definition of a conformal graph directed Markov system. Section 3 contains a collection of lemmas needed for the proof of Theorem \ref{mainthm1}; the proof itself can be found in Section 4. Section 5 contains the details necessary to construct an IFS associated to the vertex of a GDMS and the proof of Theorem \ref{mainthm2}. Finally, we add an appendix containing further background results on CGDMSs, mostly these results are given simply to clear up small inaccuracies in previously available proofs.

\section{Preliminaries}\label{sec_prelim}
\subsection{Graph directed Markov systems}
Let us first introduce graph directed Markov systems. To do this,  we need a directed multigraph
$(V,E,i,t)$ and an associated incidence matrix $A$, i.e., a matrix containing only 0s and 1s. The multigraph consists
of a finite set $V$ of vertices, a (possibly infinitely) countable set $E$
of directed edges and two functions $i,t:E\to V$, where $i(e)$ is
the initial vertex of edge $e$ and $t(e)$ is its terminal vertex. The incidence matrix  $A$ of size $\#E\times \# E$ indicates which edge(s) may follow any given edge. In other words, $A_{ef}=1$  if and only if $t(e)=i(f)$. For later use, let us also introduce some more notation. The set $E_A^\infty$ of one-sided infinite $A$-admissible words is defined to be
\[
E_A^\infty:=\left\{\omega=\omega_1\omega_2\ldots\in
E^\infty:A_{\omega_i\omega_{i+1}}=1,\ \forall i\geq1\right\}.
\]
The set of all finite subwords of $E_A^\infty$ will be denoted by
$E_A^*$. The \emph{length} of any word $\omega$ is defined to be the number of letters it is made up of, and will be denoted by $|\omega|$. For each $n\geq1$, the set of all subwords of $E_A^\infty$ of length $n$ shall be
denoted by $E^n_A$. There is a unique word of length $0$ in $E_A^*$ called
the \emph{empty word}.
If $\omega \in E_A^\infty$ and $n\geq1$, then we write $\omega|_n$ for the initial $n$-block of the word $\omega$, that is,
\[
\omega|_n=\omega_1\omega_2\ldots\omega_n.
\]
A \emph{Graph Directed Markov System} (GDMS) consists of a directed multigraph $(V,E,i,t)$,
an incidence matrix $A$, a set of non-empty compact metric spaces $\{X_v\}_{v\in V}$
and a set of $1$-to-$1$ contractions $\{\varphi_e:X_{t(e)}\to X_{i(e)}\}_{e\in E}$
with Lipschitz constant $s$, where $0<s<1$. Sometimes, in a slight abuse of notation, we will refer to this set of contractions as a GDMS, but only when the context is clear. The matrix $A$ tells us which contractions can be applied after each other, in the following way.
For each $\omega\in E_A^*$,
the map coded by $\omega$ is defined to be
\[
\varphi_\omega:=\varphi_{\omega_1}\circ\ldots\circ\varphi_{\omega_{|\omega|}}
              :X_{t(\omega)}\to X_{i(\omega)},
\]
where $t(\omega):=t(\omega_{|\omega|})$ and $i(\omega):=i(\omega_1)$.

\begin{rem}\label{defn_IFS}
If the set of vertices in the GDMS is a singleton and all the entries in the incidence matrix are 1, then the GDMS is an {\em iterated function system}, abbreviated to IFS. More concretely, an IFS is a countable set of contraction maps with Lipschitz constant $0<s<1$ which map a compact metric space into itself. Iterated function systems were well-studied before GDMSs were introduced, particularly in terms of generating fractal sets (see \cite{Fal}).
\end{rem}

Returning to our GDMS, for each $\omega\in E_A^\infty$, the sets
$\{\varphi_{\omega|_n}(X_{t(\omega_n)})\}_{n\geq1}$ form a
decreasing sequence of non-empty compact subsets of $X_{i(\omega_1)}$. Also, since for every $n\geq1$ we have that
\[
\mbox{diam}(\varphi_{\omega|_n}(X_{t(\omega_n)})) \leq s^n\mbox{diam}(X_{t(\omega_n)})\leq s^n\max\{\mbox{diam}(X_v):v\in V\},
\]
the intersection
\[
\bigcap_{n\geq1}\varphi_{\omega|_n}\left(X_{t(\omega_n)}\right)
\]
is a singleton whose element is denoted by $\pi(\omega)$. If we set $X$ to be the disjoint union of the sets $\{X_v\}_{v\in V}$, then the map
\[
\pi:E_A^\infty\to X
\]
defined in this way is called the \emph{coding map}. The set
\[
J:=J_{E,A}=\pi(E_A^\infty)
\]
is called the \emph{limit set} of the GDMS $S$.

From this point on in the paper, we make two simplifying assumptions
about the directed graph. First, we assume that for all $e\in E$
there exists $f\in E$ so that $A_{ef}=1$.
Otherwise, if there were $e\in E$ so that $A_{ef}=0$ for every $f\in E$,
then the limit set $J_{E,A}$ would be the same as the limit set
$J_{E\setminus\{e\},A}$ (in the construction of this latter set, $A$ is
restricted to $(E\setminus\{e\})^2$). Second, we assume that for every
vertex $v\in V$ there exists $e\in E$ so that $i(e)=v$. Otherwise, if
there existed $v\in V$ such that no edge has for initial vertex $v$, then
the limit set $J$ would be the same if the vertex set were $V\setminus\{v\}$.

We emphasize that we have two directed graphs that play an important
role in our study. The first one is the given multigraph $(V,E,i,t)$.
The second one, $G_{E,A}$, 
is determined by the matrix $A$. The vertices of $G_{E,A}$ are the edges of the
first one, and $G_{E,A}$ has a directed edge from $e$ to $f$ if and only if
$A_{ef}=1$. Therefore $G_{E,A}$ has infinitely many vertices and edges if and
only if $E$ is an infinite set.

We will also need the following properties of the incidence matrix $A$. Firstly, $A$ is said to be \emph{irreducible} if for any
two edges $e,f\in E$ there exists a word $\omega\in E_A^*$ so that
$e\omega f\in E_A^*$.
This is equivalent to saying that the directed graph $G_{E,A}$ is
\emph{strongly connected}, i.e. for any two vertices there exists
a path starting from one and ending at the other. The matrix $A$ is said to be \emph{finitely irreducible} if there exists
a finite set $\Omega\subset E_A^*$ so that for any two edges $e,f\in E$
there is a word $\omega\in\Omega$ so that $e\omega f\in E_A^*$.

The matrix $A$ is called \emph{primitive} if there exists $p\geq1$ such that
all the entries of $A^p$ are positive (written $A^p>0$) or, in other words,
for any two edges $e,f\in E$ there exists a word $\omega\in E_A^{p-1}$ so that
$e\omega f\in E_A^{p+1}$. Similarly, the matrix $A$ is called \emph{finitely primitive} if there exist $p\geq1$
and a finite set $\Omega\subset E_A^{p-1}$
such that for any two edges $e,f\in E$ there is a word $\omega\in\Omega$
so that $e\omega f\in E_A^{p+1}$.
\\
\\
A GDMS is called \emph{conformal}, and hence a CGDMS, if the following conditions
are satisfied:
\begin{enumerate}
\item[(1)] For every $v\in V$, the set $X_v$ is a compact connected subset of a
Euclidean space $\mathbf{R}^d$ (the dimension $d$ common for all vertices)
and $X_v=\overline{\mbox{Int}(X_v)}$.
\item[(2)] (Open Set Condition (OSC))
For every $e,f\in E$, $e\neq f$,
\[
\varphi_e\left(\mbox{Int}(X_{t(e)})\right)\bigcap\varphi_f\left(\mbox{Int}(X_{t(f)})\right)=\emptyset.
\]
\item[(3)] For every vertex $v\in V$ there exists an open connected set $W_v\supset X_v$
so that for every $e\in E$ with $t(e)=v$, the map $\varphi_e$ extends to a $C^1$ conformal
diffeomorphism of $W_v$ into $W_{i(e)}$.
\item[(4)] (Cone property) There exists $\gamma,l>0$, 
such that for every $x\in X$ there exists an open cone
$\mbox{Con}(x,\gamma,l)\subset\mbox{Int}(X)$ with vertex $x$, central
angle of measure $\gamma$, and altitude $l$.
\item[(5)] There are two constants $L\geq1$ and $\alpha>0$ so that
\[
\left||\varphi_e'(y)|-|\varphi_e'(x)|\right|\leq L\|(\varphi_e')^{-1}\|^{-1}
\|y-x\|^\alpha
\]
for every $e\in E$ and for every pair of points $x,y\in W_{t(e)}$,
where $|\varphi_e'(x)|$ represents the norm of the derivative of $\varphi_e$
at $x$. This says that the norms of the derivative maps are all H\"older continuous functions of
order $\alpha$ with H\"older constant depending on the map.
\end{enumerate}

\begin{rem}
As explained in~\cite{gdms}, condition~(5) plays a central role in
dimension $d=1$. If $d\geq2$ and we are given a GDMS
which satisfies conditions~(1) and~(3),
then it automatically fulfills condition~(5)
with $\alpha=1$. In this paper, we will only be considering GSMSs in dimension $d=1$. This also means that the property (4) will not concern us, as it is always satisfied for $d=1$.
\end{rem}
As a straightforward consequence of~(5), we obtain the famous Bounded Distortion Property (BDP):
\begin{enumerate}
\item[(6)] There exists $K\geq1$ such that
for all $\omega\in E_A^*$ and for all $x,y\in W_{t(\omega)}$,
\begin{equation}\label{BDP}
|\varphi_\omega'(y)|\leq K|\varphi_\omega'(x)|.
\end{equation}
\end{enumerate}

\subsection{GDMS for the  NICF}

As mentioned already in the introduction, the NICF cannot be described by an iterated function system, as the inverse branches are not all defined upon the same domain. Let us recall the definition of these branches:
\[
\varphi_b(x)=\frac{1}{b+x},\,\,\, |b|\geq2,
\]
where $\phi_2$ is defined on the interval $[0, 1/2]$, $\phi_{-2}$ on $[-1/2, 0]$ and $\phi_b$, for $|b|\geq3$, is defined upon $[-1/2, 1/2]$.
Therefore the composition of these inverse branches
are subject to some restrictions.  We shall
describe the restrictions by means of an incidence matrix $A$ and by
identifying the branch $\varphi_b$ with the letter $b$. Thus, the composition
$\varphi_{e}\circ\varphi_f$ shall be allowed if and only if $A_{ef}=1$,
that is,  if and only if the word $ef$ is $A$-admissible.
Let $E=\{b\in\Z:|b|\geq2\}$ and $A:E^2\to\{0,1\}$ be
the matrix defined by setting
\[
A_{ef}:=\left\{\begin{array}{llcll}
              1 & \mbox{ if } & |e|>2 &              & \\
              1 & \mbox{ if } & e=2   & \mbox{ and } & f>0 \\
              0 & \mbox{ if } & e=2   & \mbox{ and } & f<0 \\
              1 & \mbox{ if } & e=-2  & \mbox{ and } & f<0 \\
              0 & \mbox{ if } & e=-2  & \mbox{ and } & f>0.
              \end{array}
       \right.
\]

We introduce an infinite conformal graph directed Markov system which reflects
the backward trajectories of $T$, that is, the composition of the inverse branches
$\{\varphi_b\}_{|b|\geq2}$ of $T$. As these inverse branches have three different domains, we shall need three
vertices. Let the set of vertices and attached spaces be $V=\{v,w,z\}$ and
\[
X_v=\left[-\frac{1}{2},\frac{1}{2}\right],
\hspace{1cm}
X_w=\left[0,\frac{1}{2}\right]
\hspace{1cm} \mbox{ and } \hspace{1cm}
X_z=\left[-\frac{1}{2},0\right].
\]
Note that the alphabet $E$ is not sufficient to construct a graph
directed Markov system. In \cite{gdms}, page 1, the authors state: "the incidence matrix  $A$ determines which edge(s) may follow any given edge. In other words, if $A_{ef}=1$  then $t(e)=i(f)$" . We need copies of some of the letters in $E$.

\begin{itemize}
\item Draw a graph with the three vertices $v$, $w$ and $z$;
\item Draw a self-loop based at vertex $v$ for each $|e|>2$;
\item Draw an edge from vertex $v$ to vertex $w$ and identify it by the letter $2$.
\item Draw an edge from $w$ to $v$ for each $e>2$ and identify it by $\overline{e}$.
      (These edges are identified by $\overline{e}$ to distinguish them from the self-loops $e$.
      However, their corresponding generators $\varphi_{\overline{e}}$ have for codomain $X_w$,
      whereas the generators $\varphi_e$ corresponding to the self-loops have for codomain $X_v$.
      Hence, they are maps given by the same expression, having the same domain but different
      codomains.);
\item Draw a self-loop based at $w$ and identify it as $\overline{2}$;
\item Draw an edge from vertex $v$ to vertex $z$ and identify it by $-2$;
\item Draw an edge from $z$ to $v$ for each $e<-2$ and identify it by $\overline{e}$.
      (These edges are identified by $\overline{e}$ to differentiate them from the self-loops $e$.
      Note that their corresponding generators $\varphi_{\overline{e}}$ have for codomain $X_z$,
      whereas the generators corresponding to the self-loops $\varphi_e$
      have for domain $X_v$. Thus, they are maps given by the same expression,
      with the same domain but different codomains.);
\item Draw a self-loop based at $z$ and identify it by $\overline{-2}$.
\end{itemize}
We hence obtain a graph directed system $\Phi$. Define a matrix $\overline{A}$ that exactly
reflects that graph. This means that the new alphabet is
$\overline{E}=\{e:|e|\geq 2\}\cup\{\overline{e}:|\overline{e}|\geq 2\}$.
Observe that the matrix $\overline{A}$ contains essentially the same information as
the original matrix $A$. As mentioned earlier, the generators of this system are
\[
\varphi_e(x)=\varphi_{\overline{e}}(x)=\frac{1}{e+x}
\]
with domains and codomains reflecting the above graph.

Let $\om\in E_{\overline{A}}^*$. Then
\[
\varphi_\om(x)=\frac{p_{|\om|}+xp_{|\om|-1}}{q_{|\om|}+xq_{|\om|-1}},
\]
where the $p_n=p_n(\om)$'s and $q_n=q_n(\om)$'s are as defined in the introduction. Therefore,
\begin{eqnarray}\label{derivative}
|\varphi_\om'(x)|=\frac{1}{(q_{|\om|}+xq_{|\om|-1})^2}
\end{eqnarray}
since
\[
p_{n-1}q_n-q_{n-1}p_n=(-1)^n
\]
for all $1\leq n\leq|\om|$.

\section{Lemmas for later}

In this section, we  give a series of Lemmas that will be used in the proof of Theorem \ref{mainthm1}. So, let us recall that for this theorem we are using the alphabet $F:=\{b\in \Z: |b|\geq3\}$ and the full shift space associated to $F$. We will also make extensive use of the recurrence relations for the NICF which were given in the introduction. Here, though, we are using them directly on the symbolic alphabet. Note that this really means we are taking a word consisting of letters from $F$, applying the inverse coding map to it, then calculating the $q_n$s for the NICF. To save complicated notation, we will simply write it directly for the letters of $\omega$.  Remember that these recurrence relations are given, for $1\leq n\leq|\omega|$, by
\begin{equation}
\label{p equation}
p_n = \omega_n p_{n-1} + p_{n-2}.
\end{equation}
\begin{equation}
\label{q equation}
q_n = \omega_n q_{n-1} + q_{n-2}.
\end{equation}
where: $q_0=1$ and $q_1=\omega_1.$\\

We begin with a series of estimates on the size of the denominators of the convergents.

\begin{lem}\label{2.1}Let $\alpha = \frac{3 - \sqrt{5}}{2}$. Then for every $n \geq 2$ we have that
$$\frac{|q_{n-1}|}{|q_n|} \leq \alpha.$$
\begin{proof}
Using \ref{q equation}, for every $n \geq 2$, we obtain
$$\frac{q_n}{q_{n-1}}=\omega_n + \frac{q_{n-2}}{q_{n-1}}.$$
Thus, inductively, we have that
$$\left|\frac{q_{n-1}}{q_n}\right|=\frac{1}{\left|\omega_n + \frac{q_{n-2}}{q_{n-1}}\right|} \leq \frac{1}{|\omega_n| - \left|\frac{q_{n-2}}{q_{n-1}}\right|} \leq \frac{1}{3-\alpha}=\alpha.$$
\end{proof}
\end{lem}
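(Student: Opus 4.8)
The plan is to prove the inequality by induction on $n$, using the recurrence \eqref{q equation} together with the fact that $|\omega_j|\geq 3$ for every $j$ (all letters of $\omega$ come from $F=\{b\in\Z:|b|\geq 3\}$). The constant $\alpha$ is chosen precisely so that it is the attracting fixed point of the map $t\mapsto 1/(3-t)$ on $[0,1)$: equivalently, $\alpha$ is the smaller root of $t^2-3t+1=0$, so that $\alpha(3-\alpha)=1$ and hence $1/(3-\alpha)=\alpha$. This identity is the only algebraic input required.

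First I would record the base case, which is essentially trivial: since $q_0=1$ and $q_1=\omega_1$, we get $|q_0/q_1|=1/|\omega_1|\leq 1/3\leq\alpha$, and in particular $q_0,q_1\neq 0$. Then, assuming inductively that $q_{n-2}\neq 0$ and $|q_{n-2}/q_{n-1}|\leq\alpha$ for some $n\geq 2$, I would apply $q_n=\omega_n q_{n-1}+q_{n-2}$ and the reverse triangle inequality:
\[
|q_n|\geq |\omega_n|\,|q_{n-1}|-|q_{n-2}|\geq 3|q_{n-1}|-\alpha|q_{n-1}|=(3-\alpha)|q_{n-1}|.
\]
Because $3-\alpha>1$, this simultaneously yields $q_n\neq 0$ (so the induction can proceed) and $|q_{n-1}/q_n|\leq 1/(3-\alpha)=\alpha$, which is exactly the claim at index $n$. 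Phrased as in the one-line computation, $|q_{n-1}/q_n|=1/|\omega_n+q_{n-2}/q_{n-1}|\leq 1/(|\omega_n|-|q_{n-2}/q_{n-1}|)\leq 1/(3-\alpha)=\alpha$.

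There is no genuine obstacle here; the only points needing a little care are (i) that the denominators $q_{n-1}$ never vanish, so that the ratios and the reverse triangle inequality are legitimate — this is handled by carrying the statement $q_{n-1}\neq 0$ through the induction, which works because $3-\alpha>1$; and (ii) matching the base case to the range $n\geq 2$ in the statement, for which it suffices to start the induction from $|q_0/q_1|\leq 1/3\leq\alpha$. Everything else is the short displayed estimate above.
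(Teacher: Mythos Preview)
Your proof is correct and follows essentially the same inductive argument as the paper's, only spelling out the base case and the non-vanishing of the $q_n$'s that the paper leaves implicit. The key step---applying the reverse triangle inequality to the recurrence and using $1/(3-\alpha)=\alpha$---is identical.
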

From this, we obtain the following immediate corollary:
\begin{cor}\label{2.2}
For every $n \geq 2$, we have:
$$\frac{|q_{n-1}|}{|q_n|} \leq \frac{1}{|\omega_n|-\alpha}.$$
\end{cor}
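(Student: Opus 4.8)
The plan is to extract the bound directly from the computation already performed in the proof of Lemma \ref{2.1}, stopping one step earlier. Starting from the recurrence \ref{q equation} and dividing by $q_{n-1}$, we get for every $n \geq 2$ the identity
$$\frac{q_n}{q_{n-1}} = \omega_n + \frac{q_{n-2}}{q_{n-1}},$$
and hence, after taking absolute values, passing to reciprocals, and applying the reverse triangle inequality,
$$\left|\frac{q_{n-1}}{q_n}\right| = \frac{1}{\left|\omega_n + \frac{q_{n-2}}{q_{n-1}}\right|} \leq \frac{1}{|\omega_n| - \left|\frac{q_{n-2}}{q_{n-1}}\right|}.$$

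First I would verify that $|\omega_n| - |q_{n-2}/q_{n-1}| > 0$, so that these manipulations are legitimate. Since the letters of $\omega$ lie in $F$, we have $|\omega_n| \geq 3$, and I claim that $|q_{n-2}/q_{n-1}| \leq \alpha < 1$ for every $n \geq 2$. For $n \geq 3$ this is precisely Lemma \ref{2.1} with $n$ replaced by $n-1$. For the base case $n = 2$ one uses the initial values $q_0 = 1$ and $q_1 = \omega_1$ directly, so that $|q_0/q_1| = 1/|\omega_1| \leq 1/3 < \alpha$, the last inequality because $\alpha = (3-\sqrt{5})/2 > 1/3$ (equivalently $49 > 45$).

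Inserting the bound $|q_{n-2}/q_{n-1}| \leq \alpha$ into the displayed inequality then yields
$$\left|\frac{q_{n-1}}{q_n}\right| \leq \frac{1}{|\omega_n| - \alpha},$$
which is the claim. There is no genuine obstacle here; the only point deserving a word of care is the case $n = 2$, where Lemma \ref{2.1} does not apply directly and one must instead appeal to the explicit values of $q_0$ and $q_1$ together with the numerical fact $1/3 < \alpha$.
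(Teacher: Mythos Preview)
Your proof is correct and follows exactly the route the paper intends: the corollary is obtained by stopping the chain of inequalities in the proof of Lemma~\ref{2.1} one step early and inserting the bound $|q_{n-2}/q_{n-1}|\leq\alpha$. Your explicit treatment of the base case $n=2$ is a welcome bit of extra care that the paper leaves implicit in the inductive argument of Lemma~\ref{2.1}.
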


The next lemma provides an estimate in the other direction.
\begin{lem}\label{2.3}
For every $n \geq 2$, we have:
$$\frac{|q_{n-1}|}{|q_n|} \geq \frac{1}{|\omega_n|+\alpha}.$$

\begin{proof}
Using  (\ref{q equation}) as in Lemma \ref{2.1}, we first obtain that
\begin{eqnarray}\label{formerLem2.4}
\left|\frac{q_{n-1}}{q_n}\right|=\frac{1}{\left|\omega_n + \frac{q_{n-2}}{q_{n-1}}\right|} \geq \frac{1}{|\omega_n| + \left|\frac{q_{n-2}}{q_{n-1}}\right|}.
\end{eqnarray}
At this point we use Lemma \ref{2.1} to conclude that:
$$\frac{|q_{n-1}|}{|q_n|} \leq \frac{1}{|\omega_n|+\alpha}.$$
\end{proof}
\end{lem}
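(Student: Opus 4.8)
The plan is to run the same continued-fraction computation as in Lemma~\ref{2.1}, but to apply the triangle inequality in the opposite direction, so as to bound the tail $q_{n-2}/q_{n-1}$ from above rather than subtract it off. Dividing the recurrence (\ref{q equation}) through by $q_{n-1}$ gives $q_n/q_{n-1} = \omega_n + q_{n-2}/q_{n-1}$, whence, using $|a+b|\leq|a|+|b|$ in the denominator,
\[
\left|\frac{q_{n-1}}{q_n}\right| = \frac{1}{\left|\omega_n + \frac{q_{n-2}}{q_{n-1}}\right|} \geq \frac{1}{|\omega_n| + \left|\frac{q_{n-2}}{q_{n-1}}\right|}.
\]
So everything reduces to the single estimate $|q_{n-2}/q_{n-1}| \leq \alpha$.

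For $n \geq 3$ this is precisely Lemma~\ref{2.1} with the index shifted down by one, since then $n-1 \geq 2$ lies in its stated range; substituting $\alpha$ for $|q_{n-2}/q_{n-1}|$ in the display above yields $|q_{n-1}/q_n| \geq 1/(|\omega_n|+\alpha)$ at once. The only thing that needs a separate remark is the base case $n = 2$, where Lemma~\ref{2.1} as stated does not control the ratio $q_0/q_1$: here $q_0 = 1$ and $q_1 = \omega_1$ with $|\omega_1| \geq 3$, so $|q_0/q_1| = 1/|\omega_1| \leq 1/3 < \alpha$ (numerically $\alpha = (3-\sqrt5)/2 \approx 0.382$), and the same chain of inequalities gives $|q_1/q_2| \geq 1/(|\omega_2| + 1/3) \geq 1/(|\omega_2|+\alpha)$. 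Equivalently, one can observe that the inductive argument proving Lemma~\ref{2.1} in fact establishes $|q_{m-1}/q_m| \leq \alpha$ for all $m \geq 1$, which removes the need for a case split altogether.

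There is no genuine obstacle here; the argument is essentially two lines. The only points demanding care are that the index $n-1$ in the appeal to Lemma~\ref{2.1} must be kept within that lemma's range $\{2,3,\dots\}$, and that the base case closes only because $1/3$ is strictly smaller than $\alpha$ — which it is, with room to spare.
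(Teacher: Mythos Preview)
Your argument is correct and follows the same approach as the paper: rewrite $q_n/q_{n-1}$ via the recurrence, apply the triangle inequality in the denominator, and invoke Lemma~\ref{2.1} to bound the tail $|q_{n-2}/q_{n-1}|$ by $\alpha$. Your treatment is in fact more careful than the paper's, which silently applies Lemma~\ref{2.1} at $n=2$ without checking that $|q_0/q_1|\leq\alpha$; you correctly observe that this base case holds because $1/|\omega_1|\leq 1/3<\alpha$ (and indeed the induction in Lemma~\ref{2.1} itself already needs this). Note also that the paper's displayed conclusion in the proof of Lemma~\ref{2.3} has a typographical error (the inequality sign is reversed).
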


Finally,  we fix $\omega\in F^{n+1}$ and study the behaviour of the function $G_\om:\left[\frac{-1}{2},\frac{1}{2}\right] \to \left(-\infty,\infty\right)$, where we define $G_\om(x):=\left|\frac{q_n+xq_{n-1}}{q_{n+1} + x q_n}\right|$. 
This function will be used in the following section.

\begin{lem}\label{2.4}
For each $n\in \N$, we have
$$G_\om(x) \leq \frac{3}{2} \frac{1}{|\omega_{n+1}|-\alpha}.$$
\begin{proof}
First we make the simple observation that
$$G_\om(x) \leq \frac{|q_n| + |x| |q_{n-1}|}{|q_{n+1}| - |x||q_n|} \leq \frac{|q_n| + \frac{1}{2} |q_{n-1}|}{|q_{n+1}| - \frac{1}{2}|q_n|}.$$
Then, using first Lemma \ref{2.1} and then Corollary \ref{2.2} we obtain the desired estimate:
$$G_\om(x) \leq \frac{|q_n| + \frac{1}{2}\frac{2}{5} |q_n|}{|q_{n+1}| - \frac{1}{2}\frac{2}{5}|q_{n+1}|} \leq \frac{\frac{6}{5}|q_n|}{\frac{4}{5}|q_{n+1}|} \leq \frac{3}{2} \frac{1}{|\omega_{n+1}| - \alpha}.$$
\end{proof}
\end{lem}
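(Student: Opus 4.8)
The plan is to bound $G_\om(x)$ by first estimating the numerator and denominator separately using the crude bound $|x|\leq 1/2$, and then inserting the two-sided estimates on $|q_{n-1}|/|q_n|$ from the earlier lemmas. First I would write, using the triangle inequality in the numerator and the reverse triangle inequality in the denominator,
$$G_\om(x)=\left|\frac{q_n+xq_{n-1}}{q_{n+1}+xq_n}\right|\leq\frac{|q_n|+|x|\,|q_{n-1}|}{|q_{n+1}|-|x|\,|q_n|}\leq\frac{|q_n|+\tfrac12|q_{n-1}|}{|q_{n+1}|-\tfrac12|q_n|},$$
where the middle step requires knowing that $|q_{n+1}|-|x|\,|q_n|>0$; this is automatic since $|x|\leq 1/2$ and $|q_{n+1}|\geq(|\omega_{n+1}|-\alpha)|q_n|\geq(3-\alpha)|q_n|>|q_n|/2$ by Corollary \ref{2.2}.

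Next I would control the two ratios. For the numerator I use Lemma \ref{2.1}, which gives $|q_{n-1}|\leq\alpha|q_n|$, and $\alpha=\tfrac{3-\sqrt5}{2}<\tfrac25$, so $|q_n|+\tfrac12|q_{n-1}|\leq|q_n|(1+\tfrac12\cdot\tfrac25)=\tfrac65|q_n|$. For the denominator I use Corollary \ref{2.2} with $n$ replaced by $n+1$, which gives $|q_n|\leq\frac{1}{|\omega_{n+1}|-\alpha}|q_{n+1}|\leq\frac{1}{3-\alpha}|q_{n+1}|<\tfrac25|q_{n+1}|$, hence $|q_{n+1}|-\tfrac12|q_n|\geq|q_{n+1}|(1-\tfrac12\cdot\tfrac25)=\tfrac45|q_{n+1}|$. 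Combining,
$$G_\om(x)\leq\frac{\tfrac65|q_n|}{\tfrac45|q_{n+1}|}=\frac32\cdot\frac{|q_n|}{|q_{n+1}|}\leq\frac32\cdot\frac{1}{|\omega_{n+1}|-\alpha},$$
where the last inequality is Corollary \ref{2.2} once more (for index $n+1$).

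There is no serious obstacle here: the only subtlety is making sure the denominator in the first displayed inequality is positive so that the reverse triangle inequality is being applied legitimately, and that the direction of each inequality is consistent (upper bounds in the numerator, lower bounds in the denominator). One should also note that the statement is for $\omega\in F^{n+1}$, so $|\omega_{n+1}|\geq 3$ and $|\omega_{n+1}|-\alpha>0$ throughout, which is what makes every denominator strictly positive. The constants $\tfrac25$ and $\tfrac65/\tfrac45=\tfrac32$ are chosen precisely so that the bound collapses to the clean form stated.
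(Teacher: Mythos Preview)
Your proof is correct and follows essentially the same approach as the paper's: triangle/reverse-triangle inequalities followed by the bound $|q_{k-1}|/|q_k|\leq\alpha<2/5$ and then Corollary~\ref{2.2} for the final ratio. Your version is slightly more careful in that you explicitly verify the denominator is positive before applying the reverse triangle inequality, and you route the denominator bound through Corollary~\ref{2.2} rather than Lemma~\ref{2.1} directly; since $1/(3-\alpha)=\alpha$, this is the same estimate.
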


\begin{lem}\label{2.5}
For each $n\in \N$, we have
$$G_\om(x) \geq \frac{2}{3} \frac{1}{|\omega_{n+1}|+\alpha}.$$
\begin{proof}
Similarly to the proof of Lemma \ref{2.4}, we first notice that
$$G_\om(x) \geq \frac{|q_n| - |x| |q_{n-1}|}{|q_{n+1}| + |x||q_n|} \geq \frac{|q_n| - \frac{1}{2} |q_{n-1}|}{|q_{n+1}| + \frac{1}{2}|q_n|}.$$
Thus, in light of Lemmas \ref{2.1} and \ref{2.3},
$$G_\om(x) \geq \frac{|q_n| - \frac{1}{2}\frac{2}{5} |q_n|}{|q_{n+1}| + \frac{1}{2}\frac{2}{5}|q_{n+1}|} \geq \frac{\frac{4}{5}|q_n|}{\frac{6}{5}|q_{n+1}|} \geq \frac{2}{3} \frac{1}{|\omega_{n+1}| + \alpha}.$$
\end{proof}
\end{lem}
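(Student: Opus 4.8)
The plan is to mirror the proof of Lemma~\ref{2.4}, reversing every inequality. Write $G_\om(x) = \left|\frac{q_n + x q_{n-1}}{q_{n+1} + x q_n}\right|$ and recall that $|x|\le\tfrac12$ on the domain of $G_\om$. First I would bound the numerator below by the reverse triangle inequality and the denominator above by the triangle inequality, obtaining
\[
G_\om(x) \;\ge\; \frac{|q_n| - |x|\,|q_{n-1}|}{|q_{n+1}| + |x|\,|q_n|}.
\]
Before going on, one should check that the numerator on the right is positive, so that this inequality is not vacuous: by Lemma~\ref{2.1} we have $|q_{n-1}|\le\alpha|q_n|$ with $\alpha=\tfrac{3-\sqrt5}{2}<\tfrac12$, hence $|x|\,|q_{n-1}|\le\tfrac12\alpha|q_n|<|q_n|$.

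Next I would regard the right-hand side as a function of $t=|x|\in[0,\tfrac12]$. It has the form $(A-tB)/(C+tD)$ with $A,B,C,D>0$, and its $t$-derivative equals $-(AD+BC)(C+tD)^{-2}<0$, so it is decreasing in $|x|$ and is therefore smallest when $|x|=\tfrac12$. This gives
\[
G_\om(x) \;\ge\; \frac{|q_n| - \tfrac12|q_{n-1}|}{|q_{n+1}| + \tfrac12|q_n|}.
\]

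Finally I would substitute the denominator estimates. Since $\alpha<\tfrac25$, Lemma~\ref{2.1} yields both $|q_{n-1}|\le\tfrac25|q_n|$ and $|q_n|\le\tfrac25|q_{n+1}|$; inserting the first into the numerator (which only decreases it) and the second into the denominator (which only increases it) gives
\[
G_\om(x) \;\ge\; \frac{|q_n| - \tfrac12\cdot\tfrac25|q_n|}{|q_{n+1}| + \tfrac12\cdot\tfrac25|q_{n+1}|} \;=\; \frac{\tfrac45}{\tfrac65}\cdot\frac{|q_n|}{|q_{n+1}|} \;=\; \frac23\cdot\frac{|q_n|}{|q_{n+1}|},
\]
and then Lemma~\ref{2.3}, in the form $\frac{|q_n|}{|q_{n+1}|}\ge\frac{1}{|\omega_{n+1}|+\alpha}$, delivers the claimed bound. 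I do not expect any genuine obstacle here: the only points that need care are the positivity check on the numerator in the first display and getting the monotonicity in $|x|$ the right way round, so that setting $|x|=\tfrac12$ produces a true lower bound; after that the argument is pure bookkeeping with the constants $\alpha$ and $\tfrac25$.
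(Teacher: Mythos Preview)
Your argument is correct and follows the same route as the paper's proof: bound numerator below and denominator above via the triangle inequalities, pass to $|x|=\tfrac12$, replace $|q_{n-1}|$ and $|q_n|$ using $\alpha<\tfrac25$ from Lemma~\ref{2.1}, and finish with Lemma~\ref{2.3}. You merely add the explicit positivity and monotonicity checks that the paper leaves implicit.
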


\section{Proof of Theorem \ref{mainthm1}}

In order to prove our first main theorem, we will need several results which originate (in slightly different form) in \cite{AGthesis} and \cite{KZ}. To state them, we must make two further definitions: The topological pressure of $P_F$ of the IFS $\Phi_F$ is defined for each $t\in \R$ by
\[
P_F(t):= \lim_{n\to\infty} \frac{1}{n}\log (Z_n), 
\]
where $Z_n:= \sum_{\om\in F^n} ||\phi_\om'||^t$. Also, for any subset $G\subseteq F$, we write $\lambda_G:=\exp (P_G)$. 

\begin{thm}\label{Me1}
Let $\Phi$ be a conformal iterated function system. Let $F\subset E$ and $e\in E$. If
$M_e>0$ is such that
\[
\|\varphi_{\om e\overline{\om}}'\|\leq M_e\|\varphi_{\om\overline{\om}}'\|
\]
for all words $\om\in F^*$ and $\overline{\om}\in(F\cup\{e\})^*$, then
\[
\l_F(t)\leq\l_{F\cup\{e\}}(t)\leq\l_F(t)+M_e^t.
\]
\end{thm}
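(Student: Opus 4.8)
The plan is to estimate the partition functions $Z_n$ directly by splitting each word over the alphabet $F\cup\{e\}$ according to where the letter $e$ occurs, and then to recognize the resulting sums as products of partition functions over $F$. Write $G:=F\cup\{e\}$. The lower bound $\lambda_F(t)\le\lambda_G(t)$ is immediate from monotonicity of pressure: since $F\subseteq G$, for every $n$ we have $\sum_{\om\in F^n}\|\phi_\om'\|^t\le\sum_{\om\in G^n}\|\phi_\om'\|^t$, hence $P_F(t)\le P_G(t)$ and so $\lambda_F(t)\le\lambda_G(t)$. (Here I am using that all summands are positive; for $t<0$ one has to be mildly careful, but the statement is really of interest for $t\ge 0$, so I would either restrict to $t\ge0$ or note that the same monotonicity holds as long as the relevant sums are finite.)

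For the upper bound, first I would record the key consequence of the hypothesis: by telescoping, for any word that decomposes as $\om^{(0)}\,e\,\om^{(1)}\,e\cdots e\,\om^{(k)}$ with each $\om^{(j)}\in F^*$, repeated application of $\|\phi_{\om e\overline\om}'\|\le M_e\|\phi_{\om\overline\om}'\|$ (with bounded distortion absorbed into $M_e$, or rather already built into the hypothesis as stated) gives
\[
\|\phi'_{\om^{(0)} e\,\om^{(1)} e\cdots e\,\om^{(k)}}\|\;\le\; M_e^{\,k}\,\|\phi'_{\om^{(0)}\om^{(1)}\cdots\om^{(k)}}\|.
\]
Now fix $n$ and sort the words in $G^n$ by the set $S\subseteq\{1,\dots,n\}$ of positions carrying the letter $e$, say $|S|=k$; the complement positions are filled freely by letters of $F$. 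Raising to the power $t\ge0$ and using the displayed bound, the contribution of a fixed position-set $S$ is at most $M_e^{\,kt}$ times $\sum_{\tau\in F^{\,n-k}}\|\phi_\tau'\|^t = Z_{n-k}^{F}$, where the concatenation of the $F$-blocks is exactly such a $\tau$ and each $\tau$ is hit a bounded number of times — in fact each $\tau\in F^{n-k}$ arises from at most one $S$-pattern once the $F$-letters are placed, so summing over the $\binom{n}{k}$ choices of $S$ yields
\[
Z_n^{G}\;=\;\sum_{\om\in G^n}\|\phi_\om'\|^t\;\le\;\sum_{k=0}^{n}\binom{n}{k}M_e^{\,kt}\,Z_{n-k}^{F}.
\]
The hard part — and the place to be careful — is this bookkeeping: making sure the decomposition of a $G$-word into $F$-blocks separated by $e$'s is a genuine bijection onto (pattern $S$, tuple of $F$-blocks), so that there is no overcounting, and controlling that the telescoping is valid for the full norm $\|\cdot\|$ rather than a pointwise derivative (this is exactly what the hypothesis on $M_e$ is designed to give).

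Finally I would extract the exponential growth rate. Since $Z_{n-k}^F$ grows like $\lambda_F(t)^{\,n-k}$ up to subexponential factors, fix $\varepsilon>0$ and choose $C_\varepsilon$ with $Z_m^F\le C_\varepsilon(\lambda_F(t)+\varepsilon)^m$ for all $m$. Then
\[
Z_n^{G}\;\le\;C_\varepsilon\sum_{k=0}^{n}\binom{n}{k}\bigl(M_e^{\,t}\bigr)^{k}\bigl(\lambda_F(t)+\varepsilon\bigr)^{\,n-k}\;=\;C_\varepsilon\bigl(\lambda_F(t)+\varepsilon+M_e^{\,t}\bigr)^{n}
\]
by the binomial theorem. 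Taking $\frac1n\log(\cdot)$ and letting $n\to\infty$ gives $\lambda_G(t)\le\lambda_F(t)+\varepsilon+M_e^{\,t}$, and since $\varepsilon>0$ was arbitrary, $\lambda_{F\cup\{e\}}(t)\le\lambda_F(t)+M_e^{\,t}$. Combined with the lower bound this is the claim. I expect the only real subtlety to be the combinatorial identification in the middle step and the verification that subadditivity/Fekete arguments legitimately let me replace $Z_m^F$ by $(\lambda_F(t)+\varepsilon)^m$ uniformly in $m$; everything else is routine.
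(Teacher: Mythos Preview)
Your argument is correct. The telescoping step is exactly what the hypothesis is designed for: since $\om\in F^*$ and $\overline\om\in (F\cup\{e\})^*$, you may peel off the leftmost $e$ first and repeat, so the bound $\|\phi'_{\om^{(0)}e\cdots e\om^{(k)}}\|\le M_e^{\,k}\|\phi'_{\om^{(0)}\cdots\om^{(k)}}\|$ is legitimate. The combinatorics are also clean: for a \emph{fixed} position set $S$ with $|S|=k$ the map ``delete the $e$'s'' is a bijection from $\{$words in $G^n$ with $e$'s exactly at $S\}$ onto $F^{n-k}$, which gives the term $M_e^{kt}Z_{n-k}^F$; summing over $S$ and then over $k$ yields the binomial sum. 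The passage to growth rates via $Z_m^F\le C_\varepsilon(\lambda_F(t)+\varepsilon)^m$ follows simply from the existence of the limit $\lim_m (Z_m^F)^{1/m}=\lambda_F(t)$ (Fekete gives the complementary lower bound $Z_m^F\ge\lambda_F(t)^m$, which you do not need here); the case $\lambda_F(t)=\infty$ is trivial. Two cosmetic points: you implicitly use $e\notin F$ for the block decomposition (when $e\in F$ the conclusion is vacuous), and $t\ge0$ so that raising to the $t$-th power preserves inequalities; both are harmless in the intended range.

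As for comparison with the paper: there is nothing to compare against. The paper does not supply a proof of this theorem; it is stated as a known input, attributed to \cite{AGthesis} and \cite{KZ}, and used as a black box in the proof of Theorem~\ref{Mme}. Your decomposition-by-occurrences-of-$e$ and binomial recombination is in fact the standard route to such estimates and is almost certainly what those references do.
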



\begin{thm}\label{me1}
Let $\Phi$ be a conformal iterated function system. Let $F\subset E$ and $e\in E$. If
$m_e>0$ is such that
\[
\|\varphi_{\om e\overline{\om}}'\|\geq m_e\|\varphi_{\om\overline{\om}}'\|
\]
for all words $\om\in F^*$ and $\overline{\om}\in(F\cup\{e\})^*$, then
\[
\l_{F\cup\{e\}}(t)\geq\l_F(t)+m_e^t.
\]
\end{thm}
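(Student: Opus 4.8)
The plan is to compare the partition functions $Z_n$ for the alphabet $F\cup\{e\}$ and for $F$ directly, exactly as in the proof of Theorem \ref{Me1}, but now keeping only the lower bound coming from the hypothesis $\|\varphi_{\om e\overline{\om}}'\|\geq m_e\|\varphi_{\om\overline{\om}}'\|$. The key observation is that $\l_{F\cup\{e\}}(t)=\exp(P_{F\cup\{e\}}(t))$ should be understood as the spectral radius / Perron value of the associated transfer-type operator, and a word $\om\in (F\cup\{e\})^n$ is obtained from a (shorter) word over $F$ by inserting some number $k$ of copies of the letter $e$ into the gaps. So I would stratify $(F\cup\{e\})^n$ according to the positions of the letters equal to $e$, writing each such word as $\tau^{(0)} e\, \tau^{(1)} e \cdots e\, \tau^{(k)}$ with each $\tau^{(j)}\in F^*$ (blocks of consecutive $F$-letters, possibly empty), and $|\tau^{(0)}|+\dots+|\tau^{(k)}|+k=n$.

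The main step is then to apply the hypothesis $k$ times, peeling off one inserted $e$ at a time: with $\om=\tau^{(0)}$ playing the role of the prefix over $F$ and $\overline{\om}$ the remaining suffix (which lies in $(F\cup\{e\})^*$ as required), one gets
\[
\|\varphi_{\tau^{(0)} e\, \tau^{(1)} e\cdots e\,\tau^{(k)}}'\|\;\geq\; m_e\,\|\varphi_{\tau^{(0)}\tau^{(1)} e\,\tau^{(2)}\cdots e\,\tau^{(k)}}'\|,
\]
and iterating removes all copies of $e$ at the cost of a factor $m_e^{k}$, leaving $\|\varphi_{\tau^{(0)}\tau^{(1)}\cdots\tau^{(k)}}'\|$, a single word over $F$ of length $n-k$. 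Raising to the power $t$ (and being slightly careful about the sign of $t$, but the inequality direction is preserved for $t>0$, which is the only case relevant to the dimension spectrum), and summing, one obtains a lower bound for $Z_n(F\cup\{e\})$ in terms of sums $\sum_{\nu\in F^{n-k}}\|\varphi_\nu'\|^t$ weighted by $m_e^{kt}$ and by the number $\binom{n}{k}$ of ways to distribute the $k$ inserted $e$'s among the $n$ slots. Recognising $\sum_{\nu\in F^{n-k}}\|\varphi_\nu'\|^t = Z_{n-k}(F)$ and using the binomial theorem gives
\[
Z_n(F\cup\{e\})\;\geq\;\sum_{k=0}^{n}\binom{n}{k} m_e^{kt}\,Z_{n-k}(F),
\]
which, after taking $\frac1n\log(\cdot)$ and letting $n\to\infty$, yields $\l_{F\cup\{e\}}(t)\geq \l_F(t)+m_e^t$ by the standard subadditivity/Fekete argument (the right-hand side is, up to lower-order terms, $(\l_F(t)^{1/1}+m_e^t)^n$-type growth; more precisely one compares with the product of the two scalars the same way the upper bound in Theorem \ref{Me1} is handled).

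The main obstacle I anticipate is purely bookkeeping: making the ``peel off one $e$ at a time'' induction precise when the $\tau^{(j)}$ can be empty and when $e$ occurs at the very beginning or very end of the word, so that the hypothesis's requirement $\om\in F^*$, $\overline{\om}\in(F\cup\{e\})^*$ is genuinely met at every step — in particular the prefix that stays fixed must always be an honest word over $F$, so one should always strip the \emph{leftmost} inserted $e$ and take $\om=\tau^{(0)}$ there. A secondary technical point is the passage from the inequality on $Z_n$ to the inequality on the limits $\l$: this is exactly the same Fekete-type argument used (in the opposite direction) for the upper bound in Theorem \ref{Me1}, combined with the elementary fact that $\limsup_n \big(\sum_{k}\binom nk a^k b^{n-k}\big)^{1/n} = a+b$ for $a,b\ge 0$, so I would simply invoke it rather than redo it.
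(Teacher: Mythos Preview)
Your approach is correct and is essentially the standard argument. Note, however, that the paper itself does not supply a proof of Theorem~\ref{me1}: it is quoted (together with Theorem~\ref{Me1}) as a result originating in \cite{AGthesis} and \cite{KZ}. Your stratification of $(F\cup\{e\})^n$ by the positions of the letter $e$, followed by iterated removal of the leftmost $e$ using the hypothesis with $\om=\tau^{(0)}\tau^{(1)}\cdots\tau^{(j)}\in F^*$ at the $(j+1)$-th step, is exactly the right mechanism. Your bookkeeping worry about empty blocks is harmless, since the empty word belongs to $F^*$ and to $(F\cup\{e\})^*$, so the hypothesis applies even when $e$ sits at an endpoint or two copies of $e$ are adjacent. (Implicitly one needs $e\notin F$ for the position/word bijection to make sense; this is tacit in the statement.)

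One small sharpening of your final step makes the limit immediate. For a full-shift IFS one has $\|\varphi_{\om\tau}'\|\leq\|\varphi_\om'\|\cdot\|\varphi_\tau'\|$, hence $Z_{m+n}(F)\leq Z_m(F)\,Z_n(F)$; thus $\log Z_n(F)$ is subadditive and Fekete's lemma gives $Z_m(F)\geq \l_F(t)^m$ for \emph{every} $m\geq0$. Substituting this directly into your inequality yields
\[
Z_n(F\cup\{e\})\;\geq\;\sum_{k=0}^{n}\binom{n}{k}\,m_e^{kt}\,\l_F(t)^{\,n-k}\;=\;\bigl(\l_F(t)+m_e^t\bigr)^{n},
\]
so $\l_{F\cup\{e\}}(t)\geq\l_F(t)+m_e^t$ follows by taking $n$-th roots, with no further asymptotic argument required.
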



\begin{rem}\label{distort}
Under the hypothesis of Theorems~\ref{Me1} and ~\ref{me1}, the existence of $M_e$ and $m_e$ is
guaranteed by the bounded distortion property of the system. In particular,
$M_e$ can be taken to be $K\|\varphi_e'\|$, whereas $m_e$ can be taken to
be $K^{-1}\inf_{x\in X}|\varphi_e'(x)|$ or $K^{-2}\|\varphi_e'\|$, where $K$ is the constant appearing in (\ref{BDP}).
\end{rem}


The following theorem is a weakening of Theorem~2.2 in~\cite{KZ}.
\begin{thm}\label{fullspec}
Let $\Phi_\N$ be a conformal iterated function system indexed by the natural
numbers $\N$. For every $b\in\N$, let
$L_b=\{b+1,b+2,\ldots\}$. If for every
$b\in\N$, every $F\subset\{1,2,\ldots,b-1\}$ and $0<t\leq\dim_H(J_{\Phi_\N})$
we have
\begin{equation}\label{full}
\l_{F\cup\{b\}}(t)\leq\l_{F\cup L_b}(t),
\end{equation}
then $\Phi_\N$ has full Hausdorff dimension spectrum.
\end{thm}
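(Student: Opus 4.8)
\textbf{Proof proposal for Theorem~\ref{fullspec}.}
The plan is to show that the hypothesis~\eqref{full}, together with the known structural results about conformal IFSs (in particular, the characterization $\dim_H(J_G) = \inf\{t : \l_G(t) \leq 1\}$ and the monotonicity/continuity of $t\mapsto\l_G(t)$), forces the dimension spectrum $\{\dim_H(J_F) : F\subseteq\N\}$ to contain every value in $[0,\dim_H(J_\N)]$. First I would recall that $t\mapsto P_G(t)$ is convex, strictly decreasing where finite, and that $\dim_H(J_G)$ is its unique zero (equivalently the unique $t$ with $\l_G(t)=1$), so that controlling $\l_G$ controls $\dim_H(J_G)$; I would also record that $G\mapsto\l_G(t)$ is monotone in $G$ and that for the cofinite tails $L_b$ one has $\l_{L_b}(t)\to 0$ as $b\to\infty$ (pointwise in $t$), which yields $\dim_H(J_{L_b})\to 0$. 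These are standard facts from~\cite{gdms}/\cite{KZ} that I will cite rather than reprove.

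The core of the argument is a ``continuity along a filtration'' step. Fix a target value $t\in(0,\dim_H(J_\N)]$; I want $F\subseteq\N$ with $\dim_H(J_F)=t$, i.e.\ $\l_F(t)=1$. I would build $F$ greedily as an increasing union: having chosen a finite set $F_k\subseteq\{1,\ldots,b_k-1\}$ with $\l_{F_k}(t)<1$, I examine the two candidate enlargements guaranteed to bracket the situation, namely $F_k\cup\{b_k\}$ and $F_k\cup L_{b_k}$. The hypothesis~\eqref{full} gives $\l_{F_k\cup\{b_k\}}(t)\le\l_{F_k\cup L_{b_k}}(t)$. Adding the single generator $b_k$ changes $\l$ by at most $\|\phi_{b_k}'\|^t\to 0$ as $b_k\to\infty$ (this is Theorem~\ref{Me1} with $M_e=K\|\phi_e'\|$, via Remark~\ref{distort}), so by choosing $b_k$ large I can make $\l_{F_k\cup\{b_k\}}(t)$ as close to $\l_{F_k}(t)$ as I like, in particular still $<1$; on the other hand $\l_{F_k\cup L_{b_k}}(t)\ge\l_{L_{b_k}}(t)$ and, more importantly, $F_k\cup L_{b_k}$ is cofinite so $\dim_H(J_{F_k\cup L_{b_k}})$ is close to $\dim_H(J_\N)\ge t$, i.e.\ $\l_{F_k\cup L_{b_k}}(t)\ge 1$. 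Thus between the ``small'' enlargement (value $<1$) and the ``large'' enlargement (value $\ge 1$) the quantity $\l(t)$ must take values straddling $1$; I then adjoin elements of $L_{b_k}$ one at a time, each time increasing $\l(t)$ by a controlled (small, by the same Theorem~\ref{Me1} estimate) amount, stopping at the last moment before crossing $1$, and set $F_{k+1}$ to be that set, which is again finite and contained in some $\{1,\ldots,b_{k+1}-1\}$. Iterating with $\l_{F_k}(t)\uparrow 1$ (forcing the overshoot budget to shrink to $0$), I let $F=\bigcup_k F_k$ and use lower semicontinuity of $\l_{(\cdot)}(t)$ under increasing unions, $\l_F(t)=\lim_k\l_{F_k}(t)=1$, to conclude $\dim_H(J_F)=t$.

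I would handle the endpoints separately: $t=0$ is realized by a finite $F$ (or even $F=\emptyset$) since finite systems have dimension bounded away from... actually finite systems can have arbitrarily small positive dimension by taking a single generator with small derivative, and $t=0$ by $F=\emptyset$; $t=\dim_H(J_\N)$ is realized by $F=\N$ itself. The only mild subtlety is ensuring the greedy construction does not ``overshoot'' $1$ in a single step; this is where the quantitative bound $\l_{G\cup\{e\}}(t)\le\l_G(t)+\|\phi_e'\|^t$ is essential, because it lets me add tail generators with arbitrarily small individual effect, so the running value of $\l(t)$ moves through $[\l_{F_k}(t),\,\ge 1]$ in steps tending to $0$ and therefore lands in any prescribed sub-window.

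\textbf{Main obstacle.} The delicate point is not any single inequality but the bookkeeping that makes the limit exact: I must simultaneously (i) keep each $F_k$ finite and bounded by some $b_{k+1}$ so that the next step's hypothesis~\eqref{full} applies, (ii) guarantee $\l_{F_k\cup L_{b_k}}(t)\ge 1$ at every stage (using $\dim_H(J_\N)\ge t$ and the fact that removing finitely many generators perturbs the dimension continuously), and (iii) arrange $\l_{F_k}(t)\to 1$ rather than to some value $<1$. Point (iii) is the real content: it requires that at stage $k$ I push $\l_{F_{k+1}}(t)$ to within $1/k$ of $1$ (but strictly below), which is possible precisely because the incremental steps can be made $<1/k$; proving that one can always take such a step while staying below $1$ is the heart of the matter and is exactly what~\eqref{full} plus the $M_e$-estimate deliver. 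I expect the write-up to spend most of its length making this two-sided squeeze rigorous.
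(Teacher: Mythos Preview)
The paper does not supply a proof of Theorem~\ref{fullspec}; it is quoted as a weakening of Theorem~2.2 in~\cite{KZ}. Judged on its own, your proposal has a genuine gap: the bracketing step asserts that $F_k\cup L_{b_k}$, being cofinite, must have dimension close to $\dim_H(J_\N)$ and hence $\l_{F_k\cup L_{b_k}}(t)\ge 1$. This is false in general. Consider the similarity IFS with ratios $2^{-n}$, $n\ge1$, so that $\l_G(t)=\sum_{n\in G}2^{-nt}$. One checks easily that $\dim_H(J_\N)=1$ and that hypothesis~\eqref{full} holds for all $0<t\le1$ (with equality at $t=1$). Yet $\l_{L_1}(1)=\sum_{n\ge2}2^{-n}=\tfrac12<1$, so with $F_0=\emptyset$, $b_0=1$, $t=1$ your bracket already fails at the first step. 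Worse, your construction permanently discards the elements of $\{1,\ldots,b_k\}\setminus F_k$; when a single early generator carries most of the mass (as $\varphi_1$ does in this example), no choice of later additions can ever push $\l_F(t)$ up to~$1$, so the scheme cannot reach the target.

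The Kesseb\"ohmer--Zhu argument avoids this by never skipping: process $b=1,2,\ldots$ in order, accepting $b$ into $F$ iff $\l_{F_{b-1}\cup\{b\}}(t)\le1$. Then $\l_F(t)\le1$ is automatic from the construction. For the reverse inequality, assume $\l_F(t)<1$; the $M_e$-estimate (Theorem~\ref{Me1}, which you already invoke) forces all sufficiently large $b$ to be accepted, so there is a \emph{largest} rejected index $j$. Since every $k>j$ lies in $F$, we have $F\supset F_{j-1}\cup L_j$, and monotonicity together with~\eqref{full} gives $\l_F(t)\ge\l_{F_{j-1}\cup L_j}(t)\ge\l_{F_{j-1}\cup\{j\}}(t)>1$, a contradiction. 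Thus~\eqref{full} enters exactly once, in this final analytic step, rather than as a bracketing device during the construction.
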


\begin{thm}\label{Mme}
Let $\Phi_\N$ be a conformal iterated function system
such that $\dim_H(J_{\Phi_\N})\leq1$.
For every $b\in\N$, let $S_b=\{1,2,\ldots,b-1\}$ and $L_b=\{b+1,b+2,\ldots\}$.
If for some $b\in\N$ there are positive constants $M_b$ and $\{m_c\}_{c>b}$ such that
\[
m_b\|\varphi_{\om\overline{\om}}'\|
\leq
\|\varphi_{\om b\overline{\om}}'\|
\leq
M_b\|\varphi_{\om\overline{\om}}'\|
\]
for all words $\om\in S_b^*$ and $\overline{\om}\in(S_b\cup\{b\})^*$ 
and so that
\[
M_b<\sum_{c=b+1}^\infty m_c,
\]
then
\[
\l_{F\cup\{b\}}(t)\leq\l_{F\cup L_b}(t)
\]
for all $F\subset\{1,2,\ldots,b-1\}$ and all $0<t\leq\dim_H(J_{\Phi_\N})$, so (due to Theorem 7), $\Phi_\N$ has full Hausdorff dimension spectrum.
\end{thm}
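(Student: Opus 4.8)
## Proof Plan for Theorem \ref{Mme}

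The plan is to sandwich both sides of the desired inequality $\lambda_{F\cup\{b\}}(t)\le\lambda_{F\cup L_b}(t)$ against $\lambda_F(t)$ using Theorems \ref{Me1} and \ref{me1}, and then to feed the result into Theorem \ref{fullspec}. Fix once and for all the integer $b$ supplied by the hypothesis, a subset $F\subseteq S_b=\{1,\dots,b-1\}$, and an exponent $t$ with $0<t\le\dim_H(J_{\Phi_\N})$; since by assumption $\dim_H(J_{\Phi_\N})\le1$, we automatically have $0<t\le1$, a fact used below. For the upper bound, note that because $F\subseteq S_b$ every $\omega\in F^*$ lies in $S_b^*$ and every $\overline{\omega}\in(F\cup\{b\})^*$ lies in $(S_b\cup\{b\})^*$; hence the right-hand estimate in the hypothesis, $\|\varphi_{\omega b\overline{\omega}}'\|\le M_b\|\varphi_{\omega\overline{\omega}}'\|$, holds on precisely the class of words demanded by Theorem \ref{Me1} (applied with $e=b$). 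That theorem then gives
\[
\lambda_{F\cup\{b\}}(t)\le\lambda_F(t)+M_b^t .
\]

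For the lower bound I would promote the one-generator estimate of Theorem \ref{me1} to the infinite alphabet $L_b$ by iteration. For $N>b$ put $G_b:=F$ and $G_N:=F\cup\{b+1,\dots,N\}$, and apply Theorem \ref{me1} successively with $e=b+1,b+2,\dots,N$: at the step adding $c$ one needs $m_c\|\varphi_{\omega\overline{\omega}}'\|\le\|\varphi_{\omega c\overline{\omega}}'\|$ for $\omega\in G_{c-1}^*$ and $\overline{\omega}\in(G_{c-1}\cup\{c\})^*$, and this holds because, as recorded in Remark \ref{distort}, the lower constants $m_c$ are produced by the bounded distortion property (\ref{BDP}) and may be chosen independently of the surrounding words $\omega,\overline{\omega}$. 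The iteration telescopes to $\lambda_{G_N}(t)\ge\lambda_F(t)+\sum_{c=b+1}^N m_c^t$. Since $G_N\subseteq F\cup L_b$, monotonicity of the topological pressure in the alphabet — equivalently, the left-hand inequality of Theorem \ref{Me1} — gives $\lambda_{F\cup L_b}(t)\ge\lambda_{G_N}(t)$, and letting $N\to\infty$ yields
\[
\lambda_{F\cup L_b}(t)\ge\lambda_F(t)+\sum_{c=b+1}^\infty m_c^t .
\]

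It remains to compare the two tails. Because $0<t\le1$, the map $x\mapsto x^t$ is subadditive on $[0,\infty)$, so $\sum_{c=b+1}^\infty m_c^t\ge\bigl(\sum_{c=b+1}^\infty m_c\bigr)^t$; and as $x\mapsto x^t$ is strictly increasing and $\sum_{c=b+1}^\infty m_c>M_b$ by hypothesis, this forces $\sum_{c=b+1}^\infty m_c^t>M_b^t$. Stringing the three displays together,
\[
\lambda_{F\cup\{b\}}(t)\le\lambda_F(t)+M_b^t<\lambda_F(t)+\sum_{c=b+1}^\infty m_c^t\le\lambda_{F\cup L_b}(t),
\]
which is the claimed inequality, i.e.\ condition (\ref{full}) of Theorem \ref{fullspec}. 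The inequality just established holds for the given $b$ and for every admissible $F$ and $t$; consequently, if the hypothesis is in force for every $b\in\N$ then condition (\ref{full}) holds in full generality and Theorem \ref{fullspec} immediately delivers that $\Phi_\N$ has full Hausdorff dimension spectrum.

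The only genuinely non-bookkeeping point — and where I expect the main difficulty — is the passage from the single-generator statement of Theorem \ref{me1} to the full set $L_b$: one must verify that the hypothesis of Theorem \ref{me1} persists through every stage of the iteration (which is exactly what forces the constants $m_c$ to be word-uniform, hence the appeal to bounded distortion) and then justify interchanging the limit $N\to\infty$ with the pressure via alphabet-monotonicity. By contrast, the compatibility of the word classes in Theorem \ref{Me1} with the inclusion $F\subseteq S_b$, and the elementary inequality $\bigl(\sum m_c\bigr)^t\le\sum m_c^t$ for $t\le1$, are routine.
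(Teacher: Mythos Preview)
Your proposal is correct and follows essentially the same strategy as the paper: apply Theorem~\ref{Me1} with $e=b$ for the upper bound, iterate Theorem~\ref{me1} over $c=b+1,b+2,\ldots$ for the lower bound, and bridge the two via the subadditivity inequality $(\sum m_c)^t\le\sum m_c^t$ valid for $0<t\le1$. The only cosmetic difference is that the paper first fixes a \emph{finite} $n\ge b+1$ with $M_b\le\sum_{c=b+1}^n m_c$ and then runs the iteration only up to $n$, thereby avoiding any passage to the limit; your version carries the full series and invokes alphabet-monotonicity at the end, which is equally valid.
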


\begin{proof}
Fix $b\in\N$ as in the statement of the theorem.
Pick any $F\subset\{1,2,\ldots,b-1\}$ and $0\leq t\leq\dim_H(J_{\Phi_\N})$.
Choose $n\geq b+1$ so that $M_b\leq\sum_{c=b+1}^n m_c$.
Using Theorems~\ref{Me1} and~\ref{me1} (the second repeatedly), we obtain that
\begin{eqnarray*}
\l_{F\cup\{b\}}(t)
\leq\l_F(t)+M_b^t
\leq\l_F(t)+\left(\sum_{c=b+1}^n m_c\right)^t
&\leq&\l_F(t)+\sum_{c=b+1}^n m_c^t \\
&\leq&\l_{F\cup\{b+1\}}(t)+\sum_{c=b+2}^n m_c^t \\
&\leq&\l_{F\cup\{b+1,b+2\}}(t)+\sum_{c=b+3}^n m_c^t \\
&\leq&\ldots \\
&\leq&\l_{F\cup\{b+1,b+2,\ldots,n\}}(t) \\
&\leq&\l_{F\cup L_b}(t).
\end{eqnarray*}
\end{proof}


\begin{rem}\label{useG}
In light of Lemmas \ref{2.4} and \ref{2.5}, we can easily obtain some better constants $M_b$ and $m_b$ than were given in Remark \ref{distort}. To see this, let $\om$ and $\overline{\om}$ be two admissible words and let $b$ be a letter from our alphabet. We have the following:
\[
\varphi'_{\om b \overline{\om}}(x) = \varphi'_{\om b}(\varphi_{\overline{\om}}(x))\varphi'_{\overline{\om}}(x)\leq \left(\frac{3}{2} \frac{1}{|b|-\alpha}\right)^2
\varphi'_{\om }(\varphi_{\overline{\om}}(x)) \varphi'_{\overline{\om}}(x) \leq \left(\frac{3}{2} \frac{1}{|b|-\alpha}\right)^2
\varphi'_{\om {\overline{\om}}}(x)
\]
Thus we can take $M_b = \left(\frac{3}{2} \frac{1}{|b|-\alpha}\right)^2$ and, similarly, $m_b = \left(\frac{2}{3} \frac{1}{|b|+\alpha}\right)^2$.
\end{rem}

Now we are ready to prove our first main result, with the aid of the following lemma.

\begin{lem}\label{2.6}
For every $k \geq 4$, we have:
\begin{equation}\label{*lem2.6}
\left(\frac{9}{4}\right) \frac{1}{(k - \alpha)^2} \leq 2  \left(\frac{4}{9}\right) \sum_{j \geq k+1} \frac{1}{(j+\alpha)^2}
\end{equation}
\end{lem}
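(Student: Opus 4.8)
The plan is to reduce the inequality \eqref{*lem2.6} to an elementary polynomial inequality in $k$. The only facts about $\alpha=\tfrac{3-\sqrt{5}}{2}$ that I will use are the crude bounds $\tfrac38<\alpha<\tfrac25$, which are immediate from $2.2<\sqrt{5}<2.25$.

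The first step is to obtain a lower bound for the tail $\sum_{j\ge k+1}(j+\alpha)^{-2}$ that is sharp enough to survive. Set $g(x):=(x+\alpha)^{-2}$; since $g''(x)=6(x+\alpha)^{-4}>0$, the function $g$ is convex on $[k+1,\infty)$, so the trapezoidal rule overestimates: $\int_j^{j+1}g(x)\,dx\le\tfrac12\bigl(g(j)+g(j+1)\bigr)$ for every integer $j\ge k+1$. Summing over $j\ge k+1$, the left-hand side is $\int_{k+1}^{\infty}g=\tfrac1{k+1+\alpha}$ and the right-hand sides telescope to $\sum_{j\ge k+1}g(j)-\tfrac12g(k+1)$, which after rearranging yields
\[
\sum_{j\ge k+1}\frac1{(j+\alpha)^2}\ \ge\ \frac1{k+1+\alpha}+\frac1{2(k+1+\alpha)^2}.
\]

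Granting this, \eqref{*lem2.6} follows once one shows $\dfrac{9}{4(k-\alpha)^2}\le\dfrac{8}{9}\Bigl(\dfrac1{k+1+\alpha}+\dfrac1{2(k+1+\alpha)^2}\Bigr)$ for $k\ge 4$. Every quantity here is positive, so clearing denominators turns this into the polynomial inequality
\[
\tfrac{81}{16}\,(k+1+\alpha)^2\ \le\ (2k+2\alpha+3)(k-\alpha)^2 .
\]
Bounding $2k+2\alpha+3$ below with $\alpha>\tfrac38$, and bounding $(k-\alpha)^2$ below and $(k+1+\alpha)^2$ above with $\alpha<\tfrac25$, it is enough to prove
\[
Q(k):=\bigl(2k+\tfrac{15}{4}\bigr)\bigl(k-\tfrac25\bigr)^2-\tfrac{81}{16}\bigl(k+\tfrac75\bigr)^2\ \ge\ 0 \qquad (k\ge 4).
\]
Now $Q$ is a cubic with positive leading coefficient; a direct computation gives $Q(4)=11.75\cdot 12.96-5.0625\cdot 29.16>0$, and $Q'$ is an upward parabola with vertex to the left of $4$ satisfying $Q'(4)>0$, so $Q'>0$ on $[4,\infty)$; hence $Q$ is increasing there and $Q(k)\ge Q(4)>0$ for all $k\ge 4$, which proves \eqref{*lem2.6}.

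The one delicate point is that \eqref{*lem2.6} is tight: at $k=4$ the ratio of its two sides is only about $1.05$. Consequently the crude estimate $\sum_{j\ge k+1}(j+\alpha)^{-2}\ge\tfrac1{k+1+\alpha}$ coming from the bare integral comparison does \emph{not} suffice, and the convexity correction $+\tfrac1{2(k+1+\alpha)^2}$ --- equivalently, peeling off the first couple of summands before comparing with an integral --- is genuinely needed. After that the cubic estimate has comfortable slack and the remainder is routine.
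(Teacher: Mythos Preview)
Your proof is correct, and in fact it repairs a genuine error in the paper's argument. The paper follows the same overall plan --- bound the tail $\sum_{j\ge k+1}(j+\alpha)^{-2}$ from below by an integral and then reduce to an elementary inequality in $k$ --- but it uses only the bare estimate $\sum_{j\ge k+1}(j+\alpha)^{-2}\ge \frac{1}{k+1+\alpha}$ and then asserts that $\frac{9}{4}\frac{1}{(k-\alpha)^2}\le \frac{8}{9}\frac{1}{k+1+\alpha}$ for all $k\ge 4$. That assertion is false at $k=4$: it is equivalent to $573\le 241\sqrt{5}$, whereas $573^2=328329>290405=(241)^2\cdot 5$. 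Numerically, the paper's bound gives a right-hand side of about $0.165$ against a left-hand side of about $0.172$.

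Your observation that the inequality is tight at $k=4$ (ratio $\approx 1.05$) is exactly the point, and your convexity/trapezoidal correction $\sum_{j\ge k+1}(j+\alpha)^{-2}\ge \frac{1}{k+1+\alpha}+\frac{1}{2(k+1+\alpha)^2}$ is precisely what is needed to close the gap. After that, your reduction to the cubic $Q(k)\ge 0$ with the crude bounds $\tfrac38<\alpha<\tfrac25$ is clean: $Q(4)\approx 4.66>0$, the leading coefficient of $Q'$ is $6$, and the vertex of $Q'$ sits near $k\approx 0.49$, so $Q'>0$ on $[4,\infty)$ as you claim. So your argument is not merely a different route but a necessary strengthening of the paper's.
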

\begin{proof}
Using the Integral Test yields that
$$\sum_{j \geq k+1} \frac{1}{(j+\alpha)^2} \geq \frac{1}{k+1+\alpha}.$$
On the other hand, for every $k \geq 4$ we have that
$$\frac{9}{4}\frac{1}{(k - \alpha)^2} \leq \frac{8}{9} \frac{1}{k+1+\alpha}.$$
This finishes the proof.
\end{proof}

Now, combining Lemma \ref{2.6} with Remark \ref{useG} completes the proof of Theorem \ref{mainthm1}.

\section{Proof of Theorem \ref{mainthm2}}\label{sectionIFS2}

We will shortly describe in detail the IFS associated to the vertex $v$ of the GDMS introduced in Section 2 for the NICF. We refer back to that section for the definition of the alphabet $\overline{E}$. First, we give the general construction.

Suppose we have a CGDMS $\Phi= (V, E, i, t, A, \{X_v\}_{v\in V}, \{\phi_e\}_{e\in E})$. For every vertex $v\in V$ we define the alphabet $E_v\subset E_A^*$
by induction as the union $\cup_{n=1}^\infty E_{v,n}$ as follows. To begin, define
\[ E_{v,1}:=\{e\in E:i(e)=t(e)=v\}. \]
Suppose now that all the sets $E_{v,k}\subset E_A^k$, for $k=1,\ldots,n$, have
been defined. We then say that $\om\in E_A^{n+1}$ belongs to $E_{v,n+1}$
if $i(\om)=t(\om)=v$ and $\om$ is not the concatenation of words from
$\cup_{k=1}^n E_{v,k}$. In other words, $E_{v,n}$ is the set of all
$A$-admissible first-return loops of length $n$ originating from the vertex $v$.
By construction, no element of $E_v$ is a concatenation
of other elements of $E_v$.
We further define the matrix $A^{(v)}:E_v\times E_v\to\{0,1\}$ by
$A^{(v)}_{\om\overline{\om}}=1$ if and only if $\om\overline{\om}\in E_A^*$,
that is, if and only if $A_{\om_{|\om|}\overline{\om}_1}=1$, where
$\om,\overline{\om}\in E_v$.
The system  $\Phi_v$ is a CIFS whenever $\Phi$  satisfies $A_{ef}=1$  if and only if $t(e)=i(f)$.
We then say that $\Phi_v$ is the CIFS associated with the CGDMS $\Phi$ via vertex $v$.

For our example, the associated iterated
function system based at vertex $v$, which we will call $\Phi^{(v)}$, has for alphabet the first-return loops based at
$v$, i.e., all the loops $|e|>2$,
all the $(n+1)$-loops $2(\overline{2})^{n-1}\overline{e}$ with $e>2$,
and all the $(n+1)$-loops $(-2)(\overline{-2})^{n-1}\overline{f}$ with $f<-2$,
where $n\geq1$.
Thus,
\[
\overline{E}^{(v)}=\left\{e:|e|>2\right\}
                   \bigcup
                   \left\{2(\overline{2})^{n-1}\overline{e}:e>2,n\geq1\right\}
                   \bigcup
                   \left\{(-2)(\overline{-2})^{n-1}\overline{f}:f<-2,n\geq1\right\}.
\]
Recall from \cite{gdms} that the Hausdorff dimension of the limit set $J_{\overline{E}^{(v)}}$ of the associated
iterated function system $\Phi^{(v)}$ is equal to the Hausdorff dimension of the limit set $J$ of
the original graph directed system $\Phi$.

To shorten the notation, we shall replace $\overline{e}$ by $e$ whenever it is clear from
the context which of $e$ and/or $\overline{e}$ is meant. For instance, in order to respect the
graph, the word $(2)^n3$ is really the word $2(\overline{2})^{n-1}\overline{3}$.

Next, we impose the following order on the alphabet $\overline{E}^{(v)}$:
\[
\begin{array}{l}
 -3, 3, -4, 4, \\
 (-2)(-3), (-2)^2(-3), (-2)^3(-3), (2)(3), (2)^2(3), (2)^3(3), \\
 (-2)(-4), (-2)^2(-4), (-2)^3(-4), (-2)^4(-4), (-2)^4(-3), \\
 (2)(4), (2)^2(4), (2)^3(4), (2)^4(4), (2)^4(3), \\
 -5, 5, \\
 (-2)(-5), (-2)^2(-5), (-2)^3(-5), (-2)^4(-5), (-2)^5(-5), (-2)^5(-4), (-2)^5(-3), \\
 (2)(5), (2)^2(5), (2)^3(5), (2)^4(5), (2)^5(5), (2)^5(4), (2)^5(3), \\
 -6, 6, \\
 (-2)(-6), (-2)^2(-6), (-2)^3(-6), (-2)^4(-6), (-2)^5(-6), (-2)^6(-6), (-2)^6(-5), (-2)^6(-4),  \\
 (-2)^6(-3), (2)(6), (2)^2(6), (2)^3(6), (2)^4(6), (2)^5(6), (2)^6(6), (2)^6(5), (2)^6(4), (2)^6(3), \ldots \\
\end{array}
\]

For the calculations to follow, we will also need the following slightly different estimate of distortion.

\begin{lem}\label{distortword}
For any $\om,\tau\in E_A^*$ we have
\[
K_\om^{-1}\inf_{y\in X}|\varphi_\tau'(y)|\cdot\|\varphi_{\om\overline{\om}}'\|
\leq\|\varphi_{\om\tau\overline{\om}}'\|
\leq K_\om\|\varphi_\tau'\|\cdot\|\varphi_{\om\overline{\om}}'\|
\]
where $K_\om$ is a constant of distortion for $\varphi_\om$, i.e.,
\[
K_\om:=\sup_{x,y\in X}\frac{|\varphi_\om'(x)|}{|\varphi_\om'(y)|}\leq K,
\]
where
\[
K=\sup_{\om\in E_A^*}K_\om
\]
is a constant of distortion for the entire system.
\end{lem}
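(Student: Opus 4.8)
The plan is to derive both inequalities directly from the chain rule together with the bounded distortion property (\ref{BDP}), exactly as one does for the classical single-map distortion estimate, but keeping track of which constant applies to which factor. First I would write, for any $x\in X$,
\[
|\varphi_{\om\tau\overline{\om}}'(x)|
=|\varphi_\om'(\varphi_{\tau\overline{\om}}(x))|\cdot|\varphi_\tau'(\varphi_{\overline{\om}}(x))|\cdot|\varphi_{\overline{\om}}'(x)|,
\]
which is valid since all the relevant points lie in the appropriate extended neighbourhoods $W_{t(\cdot)}$ and the words are $A$-admissible (so the composition is defined). This is the only structural input; everything else is bounding the three factors.

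For the upper bound: the middle factor $|\varphi_\tau'(\varphi_{\overline\om}(x))|$ is at most $\|\varphi_\tau'\|$ by definition of the sup-norm. For the first factor, I would compare $|\varphi_\om'(\varphi_{\tau\overline\om}(x))|$ with $|\varphi_\om'(\varphi_{\overline\om}(x))|$: their ratio is at most $K_\om$ by the definition of $K_\om$ as the distortion constant of $\varphi_\om$. Reassembling,
\[
|\varphi_{\om\tau\overline\om}'(x)|
\le K_\om\,\|\varphi_\tau'\|\cdot|\varphi_\om'(\varphi_{\overline\om}(x))|\cdot|\varphi_{\overline\om}'(x)|
= K_\om\,\|\varphi_\tau'\|\cdot|\varphi_{\om\overline\om}'(x)|
\le K_\om\,\|\varphi_\tau'\|\cdot\|\varphi_{\om\overline\om}'\|,
\]
and taking the supremum over $x$ gives the right-hand inequality. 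For the lower bound I would argue symmetrically: $|\varphi_\tau'(\varphi_{\overline\om}(x))|\ge\inf_{y\in X}|\varphi_\tau'(y)|$, and $|\varphi_\om'(\varphi_{\tau\overline\om}(x))|\ge K_\om^{-1}|\varphi_\om'(\varphi_{\overline\om}(x))|$ again by the definition of $K_\om$; multiplying the three lower bounds and recognising $|\varphi_\om'(\varphi_{\overline\om}(x))|\cdot|\varphi_{\overline\om}'(x)|=|\varphi_{\om\overline\om}'(x)|$ yields
\[
|\varphi_{\om\tau\overline\om}'(x)|\ge K_\om^{-1}\inf_{y\in X}|\varphi_\tau'(y)|\cdot|\varphi_{\om\overline\om}'(x)|.
\]
To pass from this pointwise bound to the stated norm bound $\|\varphi_{\om\tau\overline\om}'\|\ge K_\om^{-1}\inf_{y}|\varphi_\tau'(y)|\cdot\|\varphi_{\om\overline\om}'\|$, I would evaluate at a point $x_0$ where $|\varphi_{\om\overline\om}'|$ attains (or nearly attains) its supremum and note $\|\varphi_{\om\tau\overline\om}'\|\ge|\varphi_{\om\tau\overline\om}'(x_0)|$. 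Finally $K_\om\le K$ is immediate from the definition of $K$ as the supremum of the $K_\om$, and the finiteness of $K$ is exactly the bounded distortion property (6).

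The only genuinely delicate point — and the step I would flag — is the domain bookkeeping: one must check that $\varphi_{\overline\om}(x)$, $\varphi_{\tau\overline\om}(x)$ and the relevant intermediate points all lie in the conformal neighbourhoods where the maps $\varphi_\tau$ and $\varphi_\om$ are defined and where $K_\om$ controls the distortion, so that the chain-rule factorisation and the comparisons are legitimate for the \emph{extended} maps on $W_v$. In the $d=1$ setting of this paper this is routine, since all $W_v$ can be taken to be fixed open intervals containing the $X_v$ and the generators extend to them by hypothesis (3); I would simply remark that admissibility of $\om\tau\overline\om$ guarantees the codomain-domain matching at each composition. Everything else is the standard bounded-distortion manipulation.
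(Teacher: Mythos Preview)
Your proposal is correct and follows essentially the same approach as the paper's proof: both arguments start from the chain-rule factorisation
\[
|\varphi_{\om\tau\overline\om}'(x)|=|\varphi_\om'(\varphi_{\tau\overline\om}(x))|\cdot|\varphi_\tau'(\varphi_{\overline\om}(x))|\cdot|\varphi_{\overline\om}'(x)|,
\]
bound the middle factor by $\|\varphi_\tau'\|$ (resp.\ $\inf_y|\varphi_\tau'(y)|$), and replace $|\varphi_\om'(\varphi_{\tau\overline\om}(x))|$ by $|\varphi_\om'(\varphi_{\overline\om}(x))|$ at the cost of $K_\om^{\pm1}$ before recombining into $|\varphi_{\om\overline\om}'(x)|$. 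The only cosmetic difference is that the paper carries the $\sup_x$ through the whole chain of inequalities, whereas you establish the pointwise bound first and then evaluate at (or near) a maximiser of $|\varphi_{\om\overline\om}'|$; these are equivalent.
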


\begin{proof}
Fix $\om,\tau\in E_A^*$. Then
\begin{eqnarray*}
\|\varphi_{\om\tau\overline{\om}}'\|
&=&\sup_{x\in X}|\varphi_{\om\tau\overline{\om}}'(x)| \\
&=&\sup_{x\in X}\left(|\varphi_\om'(\varphi_{\tau\overline{\om}}(x))|\cdot
                      |\varphi_\tau'(\varphi_{\overline{\om}}(x))|\cdot
                      |\varphi_{\overline{\om}}'(x)|\right) \\
&\leq&\|\varphi_\tau'\|
   \sup_{x\in X}\left(|\varphi_\om'(\varphi_{\tau\overline{\om}}(x))|\cdot
                      |\varphi_{\overline{\om}}'(x)|\right) \\
&=&\|\varphi_\tau'\|
   \sup_{x\in X}\left(|\varphi_\om'(\varphi_{\overline{\om}}(x))|\cdot
                      |\varphi_{\overline{\om}}'(x)|\cdot
                      \frac{|\varphi_\om'(\varphi_{\tau\overline{\om}}(x))|}
                           {|\varphi_\om'(\varphi_{\overline{\om}}(x))|}
                      \right) \\
&\leq&\|\varphi_\tau'\|K_\om
   \sup_{x\in X}\left(|\varphi_\om'(\varphi_{\overline{\om}}(x))|\cdot
                      |\varphi_{\overline{\om}}'(x)|\right) \\
&=&K_\om\|\varphi_\tau'\|
   \sup_{x\in X}|\varphi_{\om\overline{\om}}'(x)| \\
&\leq&K_\om\|\varphi_\tau'\|\cdot\|\varphi_{\om\overline{\om}}'\|.
\end{eqnarray*}
On the other hand,
\begin{eqnarray*}
\|\varphi_{\om\tau\overline{\om}}'\|
&=&\sup_{x\in X}|\varphi_{\om\tau\overline{\om}}'(x)| \\
&=&\sup_{x\in X}\left(|\varphi_\om'(\varphi_{\tau\overline{\om}}(x))|\cdot
                      |\varphi_\tau'(\varphi_{\overline{\om}}(x))|\cdot
                      |\varphi_{\overline{\om}}'(x)|\right) \\
&\geq&\inf_{y\in X}|\varphi_\tau'(y)|\cdot
   \sup_{x\in X}\left(|\varphi_\om'(\varphi_{\tau\overline{\om}}(x))|\cdot
                      |\varphi_{\overline{\om}}'(x)|\right) \\
&=&\inf_{y\in X}|\varphi_\tau'(y)|\cdot
   \sup_{x\in X}\left(|\varphi_\om'(\varphi_{\overline{\om}}(x))|\cdot
                      |\varphi_{\overline{\om}}'(x)|\cdot
                      \frac{|\varphi_\om'(\varphi_{\tau\overline{\om}}(x))|}
                           {|\varphi_\om'(\varphi_{\overline{\om}}(x))|}
                      \right) \\
&\geq&\inf_{y\in X}|\varphi_\tau'(y)|\cdot K_\om^{-1}
   \sup_{x\in X}\left(|\varphi_\om'(\varphi_{\overline{\om}}(x))|\cdot
                      |\varphi_{\overline{\om}}'(x)|\right) \\
&=&K_\om^{-1}\inf_{y\in X}|\varphi_\tau'(y)|\cdot
   \sup_{x\in X}|\varphi_{\om\overline{\om}}'(x)| \\
&\geq&K_\om^{-1}\inf_{y\in X}|\varphi_\tau'(y)|\cdot\|\varphi_{\om\overline{\om}}'\| \\
&\geq&K_\om^{-2}\|\varphi_\tau'\|\cdot\|\varphi_{\om\overline{\om}}'\|.
\end{eqnarray*}
\end{proof}

In order to apply this result, we will need the following.

\begin{lem}\label{lem_2s}
Let $\om\in \overline{E}_{\overline{A}}^*$ and suppose that $\om$ has the form $\om=\om_1\ldots \om_{n-k-1} \underbrace{2\ldots 2}_{k \ \text{times}} \om_n$, where $|\om_{n-k-1}|$ and $|\om_n|$ are both at least equal to 3. Then,
\[
\left|\frac{q_{n-1}}{q_n}\right| \leq \frac{k+2}{2k+5}.
\]
\end{lem}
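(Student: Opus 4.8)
The statement concerns a word $\omega = \omega_1\ldots\omega_{n-k-1}\underbrace{2\ldots 2}_{k}\omega_n$, where the $2$'s here are the "barred" generators $\overline 2$ corresponding to the self-loop at vertex $w$ (or $z$), so the recurrence $q_m = \omega_m q_{m-1} + q_{m-2}$ still applies with $\omega_m = 2$ in those positions. The quantity $|q_{n-1}/q_n|$ depends only on the tail of $\omega$, namely on $\omega_{n-k-1}, 2, \ldots, 2, \omega_n$, through the continued-fraction-like unfolding
\[
\left|\frac{q_{n-1}}{q_n}\right| = \frac{1}{\bigl|\omega_n + \frac{q_{n-2}}{q_{n-1}}\bigr|}.
\]
The plan is to first control the ratio $|q_{m-1}/q_m|$ at the last of the $k$ consecutive $2$'s, then observe that appending $\omega_n$ with $|\omega_n|\geq 3$ only decreases the ratio, so the bound at the block of $2$'s dominates.

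\textbf{Key steps.} First I would analyze the sequence of ratios $r_m := |q_{m-1}/q_m|$ across the block of $2$'s. Let $r_0$ be the ratio just \emph{before} the block (i.e.\ $r_0 = |q_{n-k-2}/q_{n-k-1}|$, which by Lemma~\ref{2.1} satisfies $r_0 \leq \alpha < 1/2$). Then for each subsequent position inside the block, $r_{m} = \frac{1}{|2 + (\pm r_{m-1})|}$; one must check the sign. Since all the $2$'s sit at vertex $w$ (all generators $\varphi_2(x) = 1/(2+x)$ with the same sign of branch), the $q_m$'s alternate in a controlled way and in fact the relevant recursion is $r_m = \frac{1}{2 - r_{m-1}}$ (the subtraction coming from the fact that consecutive $q$'s built from $\varphi_2$ on $[0,1/2]$ give $q_{m-2}/q_{m-1} > 0$ while... — this sign bookkeeping is exactly the point to be careful about; I would verify it directly from $q_m = 2q_{m-1}+q_{m-2}$ with the appropriate initialization). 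The map $x \mapsto 1/(2-x)$ has fixed point $1$, is increasing and contracting on $[0,1)$, and starting from $r_0 \leq 1/2$ one gets by an easy induction
\[
r_m \leq \frac{m+1}{m+2} \qquad\text{ hence after the whole block }\qquad \frac{|q_{n-k-1} \text{-indexed ratio}|}{\text{end of block}} \leq \frac{k+1}{k+2}.
\]
(I would prove $r_m \le \frac{m+1}{m+2}$ by induction: if $r_{m-1}\le \frac{m}{m+1}$ then $\frac{1}{2-r_{m-1}} \le \frac{1}{2 - m/(m+1)} = \frac{m+1}{m+2}$, with base case $r_0 \le 1/2 = \frac{1}{2}$.) Finally, the step out of the block: $\left|\frac{q_{n-1}}{q_n}\right| = \frac{1}{|\omega_n \pm r|}$ where $r \leq \frac{k+1}{k+2}$ and $|\omega_n|\geq 3$, so $\left|\frac{q_{n-1}}{q_n}\right| \leq \frac{1}{3 - \frac{k+1}{k+2}} = \frac{k+2}{2k+5}$, which is exactly the claimed bound.

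\textbf{Main obstacle.} The routine part is the induction $r_m \le \frac{m+1}{m+2}$ and the final estimate; the genuinely delicate point is the \emph{sign bookkeeping} in the recursion $r_m = 1/(2 - r_{m-1})$ versus $1/(2+r_{m-1})$. Because the $\overline 2$ self-loops live at vertex $w$ with domain $[0,1/2]$ (or $z$ with $[-1/2,0]$), the terms $q_{m-2}/q_{m-1}$ that appear inside $\omega_m + q_{m-2}/q_{m-1}$ have a definite sign, and the worst case for \emph{maximising} $|q_{n-1}/q_n|$ is when that sign is negative (pushing $|2 + q_{m-2}/q_{m-1}|$ down towards $1$). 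I would pin this down by tracking $\mathrm{sign}(q_m)$ explicitly through the block — since $q_m = 2q_{m-1} + q_{m-2}$ with $q$'s of the appropriate sign keeps them same-signed once $r_0 < 1$, giving $q_{m-2}/q_{m-1} > 0$ and hence the recursion $r_m = 1/(2 - r_{m-1})$ only after absorbing a global sign — and then the bound $r_0 \le \alpha$ from Lemma~\ref{2.1} provides the needed starting point. One should also double-check the edge case $k=1$ (a single $2$), where the claim reads $|q_{n-1}/q_n| \le 3/7$, consistent with $\frac{1}{3 - 2/3}$.
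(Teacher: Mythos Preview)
Your approach is essentially the same as the paper's: unfold the ratio through the nested inequality $|q_{m-1}/q_m| \le 1/(|\omega_m| - |q_{m-2}/q_{m-1}|)$, use $|\omega_{n-k-1}|\ge 3$ together with the general NICF bound $|q_{m-1}|\le |q_m|$ to get the starting value $\le 1/2$, and then evaluate the resulting finite continued fraction $1/(3-1/(2-\cdots-1/(2-1/2)))$, which your induction $r_m\le (m+1)/(m+2)$ computes explicitly. The ``main obstacle'' you flag is a non-issue: the inequality $r_m \le 1/(2-r_{m-1})$ follows directly from the triangle-inequality estimate regardless of the sign of $q_{m-2}/q_{m-1}$, so no sign bookkeeping is needed (and note your appeal to Lemma~\ref{2.1} for $r_0\le\alpha$ is slightly off, since that lemma is stated for the alphabet $F$; what you actually need and use is $r_0\le 1/2$, which follows from $|\omega_{n-k-1}|\ge 3$ and $|q_{m-1}/q_m|\le 1$).
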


\begin{proof}
We will use the estimate from the proof of Lemma \ref{2.1} repeatedly:
\begin{eqnarray*}
\left|\frac{q_{n-1}}{q_n}\right|&\leq& \frac{1}{3- \left|\frac{q_{n-2}}{q_{n-1}}\right|}\\
&\leq& \frac{1}{3s- \frac{1}{2-\left|\frac{q_{n-3}}{q_{n-2}}\right|}}\\
&\leq& \ldots\\
&\leq& \frac{1}{3-\frac{1}{2- \frac{1}{2 - \ddots  \frac{1}{2-\left|\frac{q_{n-k-2}}{q_{n-k-1}}\right| }}}}.
\end{eqnarray*}
Now, since $|\om_{n-k-1}|\geq3$, the last ratio satisfies $\left|\frac{q_{n-k-2}}{q_{n-k-1}}\right|\leq \frac12$. A simple calculation then finishes the proof. (Note that if the word $\om = 2\ldots 2\om_{k+1}$, the calculation stops one step earlier and we obtain a slightly better estimate.)
 \end{proof}

\begin{lem}\label{lem_est_distort}
For the system $\Phi^{(v)}$, we may take $K=25/9$.
\end{lem}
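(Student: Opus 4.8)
The plan is to bound the distortion constant $K=\sup_{\om\in\overline E^*_{\overline A}}K_\om$, where $K_\om=\sup_{x,y}|\varphi_\om'(x)|/|\varphi_\om'(y)|$, by exploiting the explicit formula \eqref{derivative}: for a word $\om$ of length $m$ one has $|\varphi_\om'(x)|=(q_m+xq_{m-1})^{-2}$, so
\[
K_\om=\sup_{x,y\in[-1/2,1/2]}\left(\frac{q_m+yq_{m-1}}{q_m+xq_{m-1}}\right)^2.
\]
The supremum over $x$ is attained where $|q_m+xq_{m-1}|$ is smallest and over $y$ where it is largest; since $x,y$ range over $[-1/2,1/2]$, we get
\[
K_\om\le\left(\frac{|q_m|+\frac12|q_{m-1}|}{|q_m|-\frac12|q_{m-1}|}\right)^2
=\left(\frac{1+\frac12|q_{m-1}/q_m|}{1-\frac12|q_{m-1}/q_m|}\right)^2.
\]
This is an increasing function of the ratio $r:=|q_{m-1}/q_m|$, so the whole problem reduces to finding a uniform upper bound for $r$ over all admissible words of the vertex system.

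First I would handle the generic case. If the last letter $\om_m$ of the (expanded) word has $|\om_m|\ge3$, then Lemma \ref{2.1} (whose proof only uses $|\om_m|\ge3$, not that \emph{all} digits are $\ge3$) gives $r\le\alpha=\tfrac{3-\sqrt5}{2}<\tfrac25$. Plugging $r=\tfrac25$ into the bound above yields $K_\om\le(6/5)^2/(4/5)^2=(3/2)^2=9/4$, which is already less than $25/9$. The delicate case is when the expanded word ends in a block of $2$'s — this happens precisely for the alphabet letters of the form $2(\overline2)^{n-1}\overline e$ and $(-2)(\overline{-2})^{n-1}\overline f$, and the tail of a concatenation of such letters can be $2(\overline2)^{k}$ for arbitrarily large $k$. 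Here I would invoke Lemma \ref{lem_2s}: if the word ends with $k$ consecutive $2$'s preceded by a letter of absolute value $\ge3$, then $r\le\frac{k+2}{2k+5}$. This quantity is increasing in $k$ and tends to $\tfrac12$ as $k\to\infty$, so in all cases $r<\tfrac12$; more carefully, since every word in $\overline E^{(v)}$ begins with a letter $|e|\ge3$ or with a $\pm2$-block whose \emph{preceding} context in the full-loop concatenation is again a letter of absolute value $\ge3$ (the first-return structure forbids a $2$-loop from being preceded by a $-2$, etc.), the hypothesis of Lemma \ref{lem_2s} is always met, with the sole exception of a pure $2$-block $2(\overline2)^{k-1}$ at the very start, where (as the parenthetical in Lemma \ref{lem_2s} notes) the estimate is even better.

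Taking the limit, $r\le\tfrac12$ is the worst case, but substituting $r=\tfrac12$ into $\bigl((1+\tfrac12 r)/(1-\tfrac12 r)\bigr)^2$ gives $(5/4)^2/(3/4)^2=(5/3)^2=25/9$. Hence $K_\om\le25/9$ for every admissible word, and taking the supremum gives $K\le25/9$, as claimed. The main obstacle is the bookkeeping in the second paragraph: one must check that a run of $2$'s (of either sign) in a word built from first-return loops of the vertex $v$ is always bounded on its left by a letter of modulus $\ge3$ — i.e. that Lemma \ref{lem_2s} genuinely applies — and treat the degenerate leading-block case separately; the rest is the elementary monotonicity computation sketched above.
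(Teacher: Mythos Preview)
Your overall strategy---bound $r=|q_{m-1}/q_m|$ and plug into $K_\om\le\bigl((1+r/2)/(1-r/2)\bigr)^2$---is exactly right, but the case analysis is confused in a way that leaves real gaps.

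The main oversight is this: every element of $\overline{E}^{(v)}$ is either a single $e$ with $|e|\ge3$, or of the form $(\pm2)^n(\pm e)$ with $|e|\ge3$, so its \emph{last} expanded digit always has modulus $\ge3$. Consequently every word in $(\overline{E}^{(v)})^*$---and these are the only words relevant for the distortion constant of the IFS $\Phi^{(v)}$---ends, when expanded in $\overline{E}$, with a digit of modulus $\ge3$. Your ``delicate case'' of a word ending in a run of $2$'s never occurs. Relatedly, you have Lemma~\ref{lem_2s} backwards: it bounds $|q_{n-1}/q_n|$ for words of the shape $\dots(\ge3)\,2^k\,(\ge3)$, i.e.\ \emph{ending} in a $\ge3$ digit preceded by $k$ twos, not ending in a block of twos.

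For the only surviving case $|\om_{|\om|}|\ge3$, your appeal to Lemma~\ref{2.1} is also invalid: that lemma's proof is by induction and needs \emph{every} digit to be $\ge3$ in order to propagate the bound $|q_{k-1}/q_k|\le\alpha$; if some earlier $\om_j=\pm2$ the induction breaks, so you do not get $r\le\alpha$ from the last digit alone.

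The paper's argument is shorter than either of your routes. Since $|\om_{|\om|}|\ge3$ always, combine the one-step recurrence inequality
\[
r\le \frac{1}{|\om_{|\om|}|-|q_{|\om|-2}/q_{|\om|-1}|}
\]
(this is just the first line of the proof of Lemma~\ref{2.1}, not its inductive conclusion) with the general NICF fact $|q_{n-1}|\le|q_n|$ recalled in the introduction. That gives $r\le 1/(3-1)=1/2$ immediately, and hence $K\le25/9$. If you prefer, Lemma~\ref{lem_2s} \emph{correctly read} also works: every word in $(\overline{E}^{(v)})^*$ has precisely the form that lemma treats (a $\ge3$ digit, preceded by $k\ge0$ twos, preceded by a $\ge3$ digit or the start of the word), yielding $r\le(k+2)/(2k+5)<1/2$ for each $k$; but this is more effort than needed.
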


\begin{proof}
Let $\om \in (\overline{E}^{(v)})_{\overline{A}}^*$ and $x,y\in X_v$. Recalling that $|\om|$ refers to the length of the word $\om$ considered as consisting of letters from $\overline{E}$, we note from the proof of Lemma \ref{2.1} that
\[
\left|\frac{q_{k-1}}{q_{k}}\right|\leq \frac{1}{|\om_{|\om|}|-\left|\frac{q_{k-2}}{q_{k-1}}\right|}
\]
Note that $|\om_{|\om|}|\geq3$, by the definition of the alphabet $\overline{E}^{(v)}$. It then follows from the fact that $|q_{n-1}|\leq |q_n|$ for all $n\geq1$ and from the above observation that
\begin{eqnarray}\label{*p17}
\left|\frac{q_{|\omega|-1}}{q_{|\omega|}}\right|\leq \frac{1}{3-1}=\frac12.
\end{eqnarray}
Using (\ref{derivative}) and (\ref{*p17}), we obtain that
\begin{eqnarray*}
\frac{|\varphi_\om'(x)|}{|\varphi_\om'(y)|}
&=&\left|\frac{q_{|\om|}+yq_{|\om|-1}}{q_{|\om|}+xq_{|\om|-1}}\right|^2
\leq\left(\frac{|q_{|\om|}|+|y||q_{|\om|-1}|}{|q_{|\om|}|-|x||q_{|\om|-1}|}\right)^2 \\
&\leq&\left(\frac{|q_{|\om|}|+\frac{1}{2}|q_{|\om|-1}|}{|q_{|\om|}|-\frac{1}{2}|q_{|\om|-1}|}\right)^2 \leq\left(\frac{|q_{|\om|}|+\frac{1}{2}\cdot\frac{1}{2}|q_{|\om|}|}
                 {|q_{|\om|}|-\frac{1}{2}\cdot\frac{1}{2}|q_{|\om|}|}\right)^2 \\
&=&25/9.
\end{eqnarray*}
Hence $25/9$ is a constant of bounded distortion for our system.
\end{proof}

We are now ready to begin the proof of Theorem \ref{mainthm2}.

\begin{proof}[Proof of Theorem \ref{mainthm2}]
We will mostly use Theorem~\ref{Mme} to establish that the associated iterated
function system $\Phi^{(v)}$ has full spectrum, and the proof is split into several different cases.
Note that we can express the
letters in $\overline{E}^{(v)}$ in the following general form: $2^jk$ and $(-2)^j(-k)$,
where $j\geq0$ and $k>2$. The calculations below involve the derivatives of the generators. Due to the symmetry in the system, we have
$\|\varphi_b'\|=\|\varphi_{-b}'\|$ for all $|b|\geq2$. Consequently, the letters $2^jk$ and
$(-2)^j(-k)$ can be treated in the same manner. Without loss of generality, we will restrict our attention to
the letters $2^jk$.

{\bf Case of the letters $2^jk$, where $j>k$.}

According to our ordering of the letters of $\overline{E}^{(v)}$,
if $j>k\geq3$ then the letter $2^jk$ precedes the letters $\pm l$, where $l\geq j+1$.
It is thus sufficient to prove that
\begin{eqnarray}\label{*p18}
M_{2^jk}\leq2\sum_{l=j+1}^\infty m_l,
\end{eqnarray}
where, according to Remark~\ref{distort}, we may take
\[
m_l=K^{-1}\inf_{x\in X_v}|\varphi_l'(x)|
=\frac{9}{25}\frac{1}{(l+\frac{1}{2})^2},
\]
and
\[
M_{2^jk}
=K\|\varphi_{2^jk}'\|
\leq K\|\varphi_2'\|^j\|\varphi_k'\|
=\frac{25}{9}\left(\frac{1}{2^2}\right)^j\frac{1}{(k-\frac{1}{2})^2}
\leq\frac{25}{9}\left(\frac{1}{2^2}\right)^j\frac{1}{(3-\frac{1}{2})^2}
=\frac{4}{9}\left(\frac{1}{2^2}\right)^j.
\]
Substituting these values into (\ref{*p18}), we see that it suffices to prove that
\[
\frac{4}{9}\left(\frac{1}{2^2}\right)^j
\leq2\cdot\frac{9}{25}\sum_{l=j+1}^\infty \frac{1}{(l+\frac{1}{2})^2},
\]
or, in other words, that
\[
\frac{50}{81}\leq 2^{2j+1}\sum_{l=j+1}^\infty \frac{1}{(l+\frac{1}{2})^2}.
\]
Using the integral test yields that
\[
\sum_{l=j+1}^\infty \frac{1}{(l+\frac{1}{2})^2}
\geq\frac{1}{j+1+\frac{1}{2}}.
\]
Consequently, proving (\ref{*p18}) boils down to proving that
\[
\frac{50}{81}(j+1+\frac{1}{2})\leq 2^{2j+1},
\]
which is certainly satisfied for $j>k\geq3$.

{\bf Case of the letters $2^jk$, where $1\leq j\leq k$.}

According to our ordering of the letters of $\overline{E}^{(v)}$,
if $1\leq j\leq k$ then the letter $2^jk$ precedes the letters $\pm l$, where $l\geq k+2$.
It is thus sufficient to prove that
\[
M_{2^jk}\leq2\sum_{l=k+2}^\infty m_l,
\]
where, as above, we may take
\[
m_l=\frac{9}{25}\frac{1}{(l+\frac{1}{2})^2}\ \text{ and } \ M_{2^jk}\leq \frac{25}{9}  \left(\frac{1}{2^2}\right)^j\frac{1}{(k-\frac{1}{2})^2}.
\]
Using the integral test again, we have that
\[
\sum_{l=k+2}^\infty \frac{1}{(l+\frac{1}{2})^2}
\geq\frac{1}{k+2+\frac{1}{2}}.
\]
Consequently, exactly analogously to the first case, it is sufficient to show that
\[
\left(\frac{25}{9}\right)^2\frac{(k+\frac{5}{2})}{(k-\frac{1}{2})^2}\leq 2^{2j+1}.
\]
It is then easy to show that the left-hand side is a decreasing
function of $k$ when $k\geq3$. Therefore it suffices to show that
\[
\left(\frac{25}{9}\right)^2\frac{(3+\frac{5}{2})}{(3-\frac{1}{2})^2}\leq 2^{2j+1}.
\]
One immediately verifies that this is true for all $j\geq1$.

{\bf Case of the letters $k$, where $k\geq6$.}

According to our ordering of the letters of $\overline{E}^{(v)}$,
the letter $k$, for $k\geq6$, precedes the letters $\pm l$, where $l\geq k+1$.
It is thus sufficient to prove that
\[
M_k\leq2\sum_{l=k+1}^\infty m_l,
\]
where once again using Remark~\ref{distort} we may take
\[
m_l=\frac{9}{25}\frac{1}{(l+\frac{1}{2})^2} \ \text{ and } \
M_k =K\|\varphi_k'\ |=\frac{25}{9}\frac{1}{(k-\frac{1}{2})^2}.
\]
It therefore suffices to prove that
\[
\left(\frac{25}{9}\right)^2\frac{1}{(k-\frac{1}{2})^2}
\leq 2\sum_{l=k+1}^\infty \frac{1}{(l+\frac{1}{2})^2}.
\]
Using the integral test again gives
\[
\sum_{l=k+1}^\infty \frac{1}{(l+\frac{1}{2})^2}
\geq\frac{1}{k+1+\frac{1}{2}}.
\]
Consequently, it is sufficient to show that
\begin{equation}\label{esti}
\left(\frac{25}{9}\right)^2\frac{(k+\frac{3}{2})}{(k-\frac{1}{2})^2}\leq 2.
\end{equation}
One immediately verifies that the left-hand side is a decreasing
function of $k$ when $k\geq3$. The smallest value of $k$ for which
relation~(\ref{esti}) holds is $k=6$.

{\bf Case of the letters $\pm5$.}

We have just proved the case $k\geq6$. To prove the result for smaller
values of $k$, we need better estimates on the distortion
and to consider more of the letters following $k$. Since the words $\om$
in Theorems~\ref{Me1} and~\ref{me1} can be taken to be composed of letters that precede $k$,
according to Lemma~\ref{distortword} we may always replace $K$ by $\max_\om K_\om$,
where the maximum is taken over
all words comprising only letters that precede $k$. Moreover, according to our
ordering of the letters of $\overline{E}^{(v)}$,
the letter $k$ precedes the letters $\pm l$, where $l\geq k+1$,
as well as the letters $2^rl$ and $(-2)^r(-l)$ for all $r\geq1$ and $l\geq k+1$.
It is thus sufficient to prove that
\begin{eqnarray}\label{*p19}
M_k
\leq2\sum_{l=k+1}^\infty m_l+2\sum_{l=k+1}^\infty\sum_{r=1}^\infty m_{2^rl}
=2\sum_{l=k+1}^\infty\left[m_l+\sum_{r=1}^\infty m_{2^rl}\right].
\end{eqnarray}
Using Remark~\ref{distort} once again, we may take
\[
m_l=\frac{9}{25}\frac{1}{(l+\frac{1}{2})^2} \ \text{ and }\ m_{2^rl}=K^{-1}\inf_{x\in X_v}|\varphi_{2^rl}'(x)|
\geq\frac{9}{25}\frac{1}{\left[(\frac{3}{2}+\sqrt{2})(1+\sqrt{2})^{r-1}\right]^2}
    \frac{1}{(l+\frac{1}{2})^2}.
\]
The latter inequality above comes from the following calculation: First observe that for the letter $2^r$ a straightforward induction argument shows that
$1\leq q_n\leq(1+\sqrt{2})^n$ for all $0\leq n\leq r$. Then
\begin{eqnarray*}
\inf_{x\in X_v}|\varphi_{2^r}'(x)|
&=&\inf_{x\in[0,1/2]}\frac{1}{(q_r+xq_{r-1})^2} \\
&=&\frac{1}{(q_r+\frac{1}{2}q_{r-1})^2} \\
&\geq&\frac{1}{\left[(1+\sqrt{2})^r+\frac{1}{2}(1+\sqrt{2})^{r-1}\right]^2} \\
&\geq&\frac{1}{\left[(\frac{3}{2}+\sqrt{2})(1+\sqrt{2})^{r-1}\right]^2}.
\end{eqnarray*}

For the left-hand side of (\ref{*p19}), according to Lemma~\ref{distortword}, we may choose
\[
M_k
=\left(\sup_{\om\prec5}K_\om\right)\|\varphi_k'\|
=\left(\frac{27}{17}\right)^2\frac{1}{(k-\frac{1}{2})^2},
\]
where the supremum is taken over all words $\om\in(\overline{E}^{(v)})_{\overline{A}}^*$ comprising
only letters that precede $5$.
Moreover, in light of Lemma \ref{lem_2s}, we have that $\sup_{\om\prec5}K_\om=\left(\frac{8}{5}\right)^2$ (this follows from a calculation identical to that in Lemma \ref{lem_est_distort}).
It therefore suffices to prove that
\[
\left(\frac{8}{5}\right)^2\frac{1}{(k-\frac{1}{2})^2}
\leq
2\cdot\frac{9}{25}\sum_{l=k+1}^\infty \frac{1}{(l+\frac{1}{2})^2}
\left\{1+\sum_{r=1}^\infty\frac{1}{\left[(\frac{3}{2}+\sqrt{2})(1+\sqrt{2})^{r-1}\right]^2}\right\},
\]
Using the integral test, it is sufficient to show that
\begin{equation}\label{ineq}
\frac{25}{18}\left(\frac{8}{5}\right)^2\frac{(k+\frac{3}{2})}{(k-\frac{1}{2})^2}
\leq
1+\frac{1}{(\frac{3}{2}+\sqrt{2})^2}\sum_{r=1}^\infty\frac{1}{[(1+\sqrt{2})^2]^{r-1}}
\end{equation}
It is again a straightforward calculation to show that the left-hand side is a decreasing
function of $k$ when $k\geq3$. It is then easy to establish that
\[
\frac{25}{18}\left(\frac{8}{5}\right)^2\frac{(5+\frac{3}{2})}{(5-\frac{1}{2})^2}
\leq1+\frac{1+\sqrt{2}}{2(\frac{3}{2}+\sqrt{2})^2}. \]
Relation~(\ref{ineq}) thus holds
for all $k\geq5$.

{\bf Case of the letters $\pm4$.}

We have so far proved the case $k\geq5$. To prove the result for smaller
values of $k$, we need an even better estimate on the distortion
and to take all the letters following $k$. Since the words $\om$
in Theorems~\ref{Me1} to~\ref{me1} can be taken to be composed of letters that precede $k$,
in light of Lemma~\ref{distortword} we may always replace $K$ by $\max_\om K_\om$,
where the maximum is taken over all words comprising only letters that precede $k$.
Moreover, according to our
ordering of the letters of $\overline{E}^{(v)}$,
the letter $k$ precedes the letters $\pm l$, where $l\geq k+1$,
as well as the letters $2^rl$ and $(-2)^r(-l)$ for all $r\geq1$ and $l\geq3$.
It is thus sufficient to prove that
\[
M_4\leq2\sum_{l=5}^\infty m_l+2\sum_{l=3}^\infty\sum_{r=1}^\infty m_{2^rl},
\]
where, according to Lemma~\ref{distortword}, we may take
\[
m_l=(\sup_{\om\prec l}K_\om)^{-1}\inf_{x\in X_v}|\varphi_l'(x)|
=\left(\frac{3l+5}{5l+7}\right)^2\frac{1}{(l+\frac{1}{2})^2},
\]
where the supremum is taken over all words $\om\in(\overline{E}^{(v)})_{\overline{A}}^*$ comprising
only letters that precede $l$
and
\[
m_{2^rl}=K^{-1}\inf_{x\in X_v}|\varphi_{2^rl}'(x)|
\geq\frac{9}{25}\frac{1}{\left[(\frac{3}{2}+\sqrt{2})(1+\sqrt{2})^{r-1}\right]^2}\frac{1}{(l+\frac{1}{2})^2}.
\]
Here we obtain that $\sup_{\om\prec l}K_\om= ((5l+7)/(3l+5))^2$ by applying Lemma \ref{lem_2s} and making a calculation as in the previous case.

We also have that
\[
M_4
=\left(\sup_{\om\prec4}K_\om\right)\|\varphi_4'\|
=\left(\frac{1+\frac{3-\sqrt{5}}{4}}{1-\frac{3-\sqrt{5}}{4}}\right)^2\frac{1}{(4-\frac{1}{2})^2},
\]
where the supremum is taken over all words $\om\in(\overline{E}^{(v)})_{\overline{A}}^*$ comprising
only letters that precede $4$. Indeed,
\[
\sup_{\om\prec4}K_\om
=\left(\frac{1+\frac{3-\sqrt{5}}{4}}{1-\frac{3-\sqrt{5}}{4}}\right)^2 = \left(\frac{7-\sqrt 5}{1+\sqrt 5}\right)^2,
\]
as one can show that for any word $\om\in(\overline{E}^{(v)})_{\overline{A}}^*$ comprising
only letters that precede $4$, we have
$|q_{n-1}|\leq\frac{3-\sqrt{5}}{2}|q_n|$ for all $0\leq n\leq|\om|$
since in this case we have $3$ repeated any finite number of times, and the solution in $[-1/2, 1/2]$ to the equation $x=1/(3-x)$ is  $\frac{3-\sqrt{5}}{2}$.
Then we again make a calculation as in Lemma \ref{lem_est_distort}.

It therefore suffices to prove that
\[
\left(\frac{7-\sqrt5}{1+\sqrt5}\right)^2
\left(\frac{4}{49}\right)^2
\leq
2\sum_{l=5}^\infty \left(\frac{3l+5}{5l+7}\right)^2\frac{1}{(l+\frac{1}{2})^2}
+2\cdot\frac{9}{25}\sum_{l=3}^\infty \sum_{r=1}^\infty
\frac{1}{\left[(\frac{3}{2}+\sqrt{2})(1+\sqrt{2})^{r-1}\right]^2}\frac{1}{(l+\frac{1}{2})^2},
\]
i.e.
\[
\left(\frac{7-\sqrt5}{1+\sqrt5}\right)^2
\left(\frac{4}{49}\right)^2
\leq
2\sum_{l=5}^\infty \left(\frac{3l+5}{5l+7}\right)^2\frac{1}{(l+\frac{1}{2})^2}
+\frac{18/25}{(\frac{3}{2}+\sqrt{2})^2}\sum_{r=1}^\infty\frac{1}{[(1+\sqrt{2})^2]^{r-1}}
 \sum_{l=3}^\infty \frac{1}{(l+\frac{1}{2})^2}.
\]
Using the integral test, we have that
\[
\sum_{l=k}^\infty \frac{1}{(l+\frac{1}{2})^2}
\geq\frac{1}{k+\frac{1}{2}}.
\]
Consequently, it is sufficient to show that
\[
\left(\frac{7-\sqrt5}{1+\sqrt5}\right)^2
\left(\frac{4}{49}\right)^2
\leq
2\sum_{l=5}^\infty \left(\frac{3l+5}{5l+7}\right)^2\frac{1}{(l+\frac{1}{2})^2}
 +\frac{18/25}{(\frac{3}{2}+\sqrt{2})^2}\frac{1}{1-\frac{1}{(1+\sqrt{2})^2}}
 \frac{1}{3+\frac{1}{2}}.
\]
Hence it is sufficient to show that
\[
\left(\frac{7-\sqrt5}{1+\sqrt5}\right)^2
\left(\frac{4}{49}\right)^2
\leq2\sum_{l=5}^\infty \left(\frac{3l+2}{5l+2}\right)^2\frac{1}{(l+\frac{1}{2})^2}
+\frac{18}{175}\frac{1+\sqrt{2}}{(\frac{3}{2}+\sqrt{2})^2}.
\]
Numerical calculations using Mathematica show that this relation is true.

{\bf Case of the letter $-3$.}

Rather than using Theorem~\ref{Mme}, we shall show directly that relation~(\ref{full}) holds.
Since $-3$ is the first letter in the alphabet $\overline{E}^{(v)}$, relation~(\ref{full})
holds as $-3$ is followed by $3$ and
\[
\l_{\{-3\}}(t)=\l_{\{3\}}(t)\leq\l_{\overline{E}^{(v)}\backslash\{-3\}}(t)
\]
for all $t\geq0$.

{\bf Case of the letter $3$.}

Again, we shall show directly that relation~(\ref{full}) holds.
Since $3$ is the second letter in the alphabet $\overline{E}^{(v)}$, relation~(\ref{full})
holds as
\[
\l_{\{-3\}}(t)\leq\l_{\{-3,3\}}(t)
\leq\l_{\{l:|l|\geq4\}}(t)
\leq\l_{\overline{E}^{(v)}\backslash\{-3,3\}}(t)
\leq\l_{\overline{E}^{(v)}\backslash\{-3\}}(t)
\]
for all $0\leq t\leq1$. Indeed, let us prove that
$\l_{\{-3,3\}}(t)\leq\l_{\{l:|l|\geq4\}}(t)$ for all $0\leq t\leq1$.
On the one hand, we have that
\begin{equation}\label{z3}
\l_{\{-3,3\}}(t)
\leq Z_{1,\{-3,3\}}(t)
=\|\varphi_{-3}'\|^t+\|\varphi_3'\|^t
=\frac{2}{(3-\frac{1}{2})^{2t}}
=\frac{2}{\left(\frac{5}{2}\right)^{2t}}.
\end{equation}
On the other hand, we have that
\[
\l_{\{l:|l|\geq4\}}(t)\geq K_4^{-1}Z_{1,\{l:|l|\geq4\}}(t)
=2K_4^{-1}\sum_{l=4}^\infty\|\varphi_l'\|^t
=2K_4^{-1}\sum_{l=4}^\infty\frac{1}{(l-\frac{1}{2})^{2t}},
\]
where $K_4$ is a constant of bounded distortion for the subsystem $\{\varphi_l:|l|\geq4\}$.
Thus, since $\l_{\{-3,3\}}(t)$ is finite and  $\l_{\{l:|l|\geq4\}}(t)$ is infinite whenever $t\leq1/2$, we have $\l_{\{-3,3\}}(t)<\l_{\{l:|l|\geq4\}}(t)$. When $t>1/2$,
it follows from the integral test that
\begin{equation}\label{z4}
\l_{\{l:|l|\geq4\}}(t)
\geq2K_4^{-1}\frac{1}{(2t-1)(4-\frac{1}{2})^{2t-1}}
=2K_4^{-1}\frac{1}{(2t-1)(\frac{7}{2})^{2t-1}}.
\end{equation}
One can show that for any word $\om\in\{l:|l|\geq4\}^*$, we have
$|q_{n-1}|\leq(2-\sqrt{3})|q_n|$ for all $0\leq n\leq|\om|$
as the worst case scenario is to have the letters $-4$ and/or $4$ repeated any
finite number of times, and $2-\sqrt3$ is the solution of  $x=1(4-x)$ in $[-1/2, 1/2]$. Then calculating as in Lemma \ref{lem_est_distort} again, we can take
\begin{equation}\label{k4}
K_4=\left(\frac{1+\frac{2-\sqrt{3}}{2}}{1-\frac{2-\sqrt{3}}{2}}\right)^2= \left(\frac{4-\sqrt3}{1+\sqrt3}\right)^2.
\end{equation}
According to~(\ref{z3}),~(\ref{z4}) and~(\ref{k4}), to prove
$\l_{\{-3,3\}}(t)\leq\l_{\{l:|l|\geq4\}}(t)$ when $t>1/2$ it suffices
to show that
\[
\frac{1}{(2t-1)}\left(\frac{5}{7}\right)^{2t}
\geq\frac{2}{7}\left(\frac{4-\sqrt3}{1+\sqrt3}\right)^2.
\]
By looking at its first derivative, it is easy to see that the left-hand side is a
decreasing function of $t$ on $1/2<t\leq1$. Thus, we only need to show that
\[
\left(\frac{5}{7}\right)^2
\geq\frac{2}{7}\left(\frac{4-\sqrt3}{1+\sqrt3}\right)^2.
\]
Numerical calculations show that this relation is true.

We have hence demonstrated that relation~(\ref{full}) holds for all letters of
$\overline{E}^{(v)}$ under the ordering we chose. Therefore the system
$\Phi^{(v)}$ has full spectrum
according to Theorem~\ref{fullspec}.
\end{proof}

\section{Appendix}

In this appendix we follow the ideas from \cite{gdms}, and add some explanatory examples.

The following is a restatement of Proposition~4.7.2 in~\cite{gdms} with an
annotated proof.

\begin{prop}\label{jivjv}
Suppose that $\Phi$ is a CGDMS with an irreducible matrix.
For every vertex $v\in V$ the limit set $J_{E_v}:=\pi(E_v^\infty)$ of $\Phi_v$
is contained in the subset $J_v:=\pi(\{\om\in E_A^\infty:i(\om)=v\})$ of the
limit set of $\Phi$. Moreover, $\overline{J_{E_v}}=\overline{J_v}$.
\end{prop}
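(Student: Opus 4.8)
The plan is to prove the two assertions separately, beginning with the inclusion $J_{E_v}\subseteq J_v$, which is essentially definitional. An element of $J_{E_v}$ is $\pi_v(\tau)$ for some infinite word $\tau=\tau_1\tau_2\ldots\in E_v^\infty$, where each $\tau_i\in E_v\subseteq E_A^*$ is a first-return loop at $v$. Concatenating these finite blocks produces an infinite word $\om=\tau_1\tau_2\cdots\in E^\infty$; I would first check that $\om$ is $A$-admissible (each $\tau_i$ is $A$-admissible internally, and $A^{(v)}_{\tau_i\tau_{i+1}}=1$ for all $i$ because $\tau$ is admissible in the alphabet $E_v$, which is exactly the condition that the junctions are $A$-admissible). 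Since every $\tau_i$ is a loop at $v$, we have $i(\om)=i(\tau_1)=v$, so $\om\in\{\sigma\in E_A^\infty:i(\sigma)=v\}$. Finally, the nested intersection defining $\pi_v(\tau)$ in the alphabet $E_v$ is a subsequence (along the partial sums of the lengths $|\tau_1|,|\tau_1|+|\tau_2|,\ldots$) of the nested intersection defining $\pi(\om)$ in the alphabet $E$; since both intersections are singletons and one sequence of compacta is a subsequence of the other, $\pi_v(\tau)=\pi(\om)\in J_v$.

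Next I would address the reverse density statement, $\overline{J_v}\subseteq\overline{J_{E_v}}$; combined with $J_{E_v}\subseteq J_v$ (hence $\overline{J_{E_v}}\subseteq\overline{J_v}$) this gives $\overline{J_{E_v}}=\overline{J_v}$. It suffices to show $J_v\subseteq\overline{J_{E_v}}$. Fix $x=\pi(\om)$ with $\om\in E_A^\infty$, $i(\om)=v$. The idea is to approximate $x$ by points of $J_{E_v}$ obtained by taking a long initial segment of $\om$, "closing it up" into a loop at $v$ using irreducibility, and then iterating that loop. Concretely, for each $n$ look at the prefix $\om|_n$; its terminal vertex is $w_n:=t(\om_n)$. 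By irreducibility of $A$ there is a word $\rho_n\in E_A^*$ with $w_n\,\rho_n\,\om_1\in E_A^*$, i.e. a return path from $w_n$ back to $i(\om_1)=v$ (note $\rho_n$ may be taken so that $t(\rho_n)=v$ after possibly appending; more precisely irreducibility gives $\om_n\rho_n e\in E_A^*$ for any edge $e$ with $i(e)=v$, and one checks $t(\rho_n)=v$). Then $\gamma_n:=\om|_n\rho_n$ is an $A$-admissible loop at $v$, so $\gamma_n\in E_v^*$ (it is a concatenation of first-return loops at $v$, hence lies in the monoid generated by $E_v$), and the periodic word $\gamma_n^\infty\in E_v^\infty$ yields a point $y_n:=\pi_v(\gamma_n^\infty)=\pi(\gamma_n^\infty)\in J_{E_v}$.

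The final step is the estimate $y_n\to x$. Both $x=\pi(\om)$ and $y_n=\pi(\gamma_n^\infty)$ lie in $\varphi_{\om|_n}(X_{t(\om_n)})$, because $\gamma_n^\infty$ and $\om$ share the prefix $\om|_n$. Hence
\[
\|x-y_n\|\le\diam\bigl(\varphi_{\om|_n}(X_{t(\om_n)})\bigr)\le s^n\max_{v\in V}\diam(X_v)\longrightarrow 0,
\]
using the contraction bound recorded in Section~2. Therefore $x\in\overline{J_{E_v}}$, completing the proof. The step I expect to require the most care is the "closing up" argument: one must be sure that irreducibility produces a return word $\rho_n$ whose terminal vertex is exactly $v$ (so that $\gamma_n$ really is a loop at $v$), and that an admissible loop at $v$ genuinely decomposes as a concatenation of elements of $E_v$ — this is where the construction of $E_v$ as the set of \emph{all} first-return loops (not just some) is used, together with the observation that any loop at $v$ can be cut at its successive returns to $v$ into first-return pieces. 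Everything else is bookkeeping with the coding map and the uniform contraction estimate.
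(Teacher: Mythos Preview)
Your proposal is correct and follows essentially the same approach as the paper. The only cosmetic difference is in the choice of approximating tail: the paper, after connecting $\om|_n$ back to $v$ via a word $\alpha^{(n)}$, appends an arbitrary fixed $\beta^{(n)}\in E_v^\infty$, whereas you iterate the closed-up loop $\gamma_n=\om|_n\rho_n$ periodically; both produce an element of $E_v^\infty$ agreeing with $\om$ on its first $n$ letters, and the convergence argument is identical.
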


\begin{proof}
Since $E_v^\infty\subset\{\om\in E_A^\infty:i(\om)=v\}$, we have
$J_{E_v}\subset J_v$. Hence $\overline{J_{E_v}}\subset\overline{J_v}$.
In order to prove the opposite inclusion it suffices to demonstrate that each
element of $J_v$ is the limit of elements of $J_{E_v}$. Indeed, let $x=\pi(\om)$,
where $\om\in E_A^\infty$ with $i(\om)=v$. Since $A$ is irreducible, for every
$n\in \N$
there exist $\a^{(n)}\in E_A^*$ and
$\b^{(n)}\in
E_v^\infty$ such that
$\om|_n\a^{(n)}\b^{(n)}\in E_A^\infty$. Since $\b^{(n)}\in E_v^\infty$ and
$i(\om)=v$, we have $\om|_n\a^{(n)}\b^{(n)}\in E_v^\infty$. Hence
$\pi(\om|_n\a^{(n)}\b^{(n)})\in J_{E_v}$ for every $n\in\N$ and thus
$\lim_{n\to\infty}\pi(\om|_n\a^{(n)}\b^{(n)})
=\pi(\lim_{n\to\infty}\om|_n\a^{(n)}\b^{(n)})=\pi(\om)=x$.
Consequently, $x\in\overline{J_{E_v}}$. Since $x$ was chosen arbitrarily in
$J_v$, we deduce that $J_v\subset\overline{J_{E_v}}$. Hence
$\overline{J_v}\subset\overline{J_{E_v}}$.
\end{proof}

We shall now compare the pressures of the original and the associated systems.

\begin{thm}\label{pressass}
If $\Phi$ is a CGDMS with a finitely irreducible matrix, then
\[
P(t)\leq\min_{v\in V}P_{E_v}(t) \mbox{ whenever } P(t)>0
\]
and 
\[
P(t)=\max_{v\in V}P_{E_v}(t) \mbox{ whenever } P(t)\leq0,
\]
where $P_{E_v}(t)$ is the pressure of the system $\Phi_v$.
\end{thm}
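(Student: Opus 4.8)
The plan is to compare the partition functions of $\Phi$ with those of the associated systems $\Phi_v$ by relating $A$-admissible words to first-return loops. Fix $v\in V$ and write $E_v^*$ for the finite words over the alphabet $E_v$ of first-return loops at $v$; every such word, read as a concatenation of edges, is an $A$-admissible word in $E_A^*$ that begins and ends at $v$, so there is a natural injection $E_v^*\hookrightarrow E_A^*$ whose image consists exactly of the $A$-admissible words $\om$ with $i(\om)=t(\om)=v$ that admit a (necessarily unique) decomposition into first-return loops. Because the maps coded by concatenations multiply, $\|\varphi_\om'\|$ for $\om\in E_v^*$ is comparable, up to the bounded distortion constant $K$, to the product of the $\|\varphi_\cdot'\|$ of its loop-letters, so the partition function $Z_n^{(v)}(t):=\sum_{\om\in E_v^n}\|\varphi_\om'\|^t$ of $\Phi_v$ is, after summing over $n$, essentially $\sum_{\om}\|\varphi_\om'\|^t$ over the first-return-decomposable words at $v$.

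First I would establish the inequality $P(t)\le P_{E_v}(t)$ whenever $P(t)>0$, which is the easier direction: every $A$-admissible word can be extended, using finite irreducibility, to a word that returns to $v$, and then one truncates it into first-return loops; comparing $\sum_{\om\in E_A^n}\|\varphi_\om'\|^t$ to the corresponding sum over loops at $v$ costs only the bounded multiplicative factor coming from the finitely many connecting words in $\Omega$ (finite irreducibility) together with the distortion constant $K$, and under $P(t)>0$ the exponential growth rate is unchanged by such bounded factors; taking the minimum over $v$ gives $P(t)\le\min_v P_{E_v}(t)$. For the second assertion, when $P(t)\le0$, I would instead use a summability (rather than growth-rate) argument: $P(t)\le0$ means $\sum_{\om\in E_A^*}\|\varphi_\om'\|^t<\infty$ (more precisely, the associated series is finite, so one is in the regime where pressure equals the log of the sum of the $n$-th partition functions up to normalization), and then $\exp P_{E_v}(t)$ is governed by $\sum_{\om\in E_v^*}\|\varphi_\om'\|^t$, which is dominated by $K^t\sum_{\om\in E_A^*,\,i(\om)=t(\om)=v}\|\varphi_\om'\|^t$ since the loop-decomposition embeds $E_v^*$ into admissible words at $v$; summing the contributions from the vertices and using finite irreducibility to generate all admissible words from loops at a fixed vertex shows that the largest of the $P_{E_v}(t)$ recaptures the full sum, giving $P(t)=\max_v P_{E_v}(t)$.

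The key technical point — and the main obstacle — is making precise the passage between "generating function over $A$-admissible words" and "generating function over first-return loops at $v$," i.e.\ showing that composing/concatenating admissible words with the finitely many connecting words from $\Omega$ and with loops at $v$ produces exactly (a bounded-multiplicity cover of) the loop words at $v$, with derivative norms controlled only by $K$ and by the finitely many norms $\|\varphi_\tau'\|$, $\tau\in\Omega$. Here I would invoke Lemma~\ref{distortword} to pass derivatives through inserted words at bounded cost, and Proposition~\ref{jivjv} (or rather the combinatorics behind it) to see that loops at $v$ are cofinal in the admissible words at $v$. The subtlety in the $P(t)\le0$ case is that one cannot use growth rates, so one must argue at the level of the (finite or infinite) sums themselves, carefully checking that the inserted connecting words contribute only a finite multiplicative constant and do not affect finiteness; once that is in hand, the two displayed identities follow by taking $\min$ resp.\ $\max$ over the finite vertex set $V$.
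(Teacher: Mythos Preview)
Your proposal captures the right intuition---extend admissible words to loops at $v$ via the connecting words from finite irreducibility, then control derivatives by bounded distortion---but it misses the central technical point, which is the mismatch between the $E$-length $|\om|$ and the $E_v$-length $|\om|_v$ of a word $\om\in E_v^*$. A single first-return loop can have arbitrarily large $E$-length, so when you map $E_A^n$ into $E_v^*$ the image lands in many different $E_v$-levels; ``the exponential growth rate is unchanged by bounded factors'' is simply not enough, because the two pressures are exponential rates in \emph{different} length parameters. This is precisely why the theorem has two cases. The paper resolves this by using the summability characterization of pressure (Theorem~2.1.3 in~\cite{gdms}): $P(t)<u$ iff $\sum_{\om\in E_A^*}\|\varphi_\om'\|^t e^{-u|\om|}<\infty$, and similarly for $P_{E_v}$ with $|\om|_v$. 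The key inequality $|\tau|_v\le|\tau|$ then gives $e^{-u|\tau|_v}\ge e^{-u|\tau|}$ when $u>0$ and the reverse when $u\le0$, which is exactly what forces the direction of the comparison to depend on the sign of the pressure.

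There is also a concrete error: you assert that $P(t)\le0$ implies $\sum_{\om\in E_A^*}\|\varphi_\om'\|^t<\infty$, but this fails at $P(t)=0$ (indeed at the dimension this sum is typically infinite). The paper handles $P(t)<0$ and then passes to $P(t)=0$ by right-continuity of pressure. Finally, your sketch of $P(t)\le\max_v P_{E_v}(t)$ is not a proof: embedding $E_v^*\hookrightarrow E_A^*$ gives the opposite inequality, and ``the largest $P_{E_v}(t)$ recaptures the full sum'' is where the work is. The paper's argument here uses a pigeonhole step---some vertex is visited at least $[|\om|/|V|]$ times by each $\om\in E_A^n$, giving a lower bound on the $v$-length of the associated loop word---followed by a subsequence argument over the finite vertex set. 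Without addressing the $|\om|$ versus $|\om|_v$ issue explicitly, your outline does not close.
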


\begin{proof}
First, we prove that $P(t)\leq\min_{v\in V}\max\{P_{E_v}(t),0\}$. Fix $v\in V$.
If $P_{E_v}(t)=\infty$, then clearly $P(t)\leq\max\{P_{E_v}(t),0\}$. If
$P_{E_v}(t)<\infty$, then let $u>\max\{P_{E_v}(t),0\}$. Let $W\subset E_A^*$ be a finite set
witnessing the irreducibility of $A$. Let
$\rho:=\min\{\|\varphi_\tau'\|:\tau\in W\}$ and $\lambda:=\max\{|\tau|:\tau\in W\}$.
Then $\rho>0$ and $\lambda<\infty$. For every $e\in E$ let $\a(e),\b(e)\in W$ be
such that $i(\a(e))=v$, $t(\b(e))=v$ and $\a(e)e\b(e)\in E_A^*$. Set
$\a(\om):=\a(\om_1)$ and $\b(\om):=\b(\om_{|\om|})$ for every $\om\in E_A^*$.
Observe that the function $\om\mapsto\a(\om)\om\b(\om)$ is at most $\lambda$-to-one.
Indeed, suppose that $\om,\tau\in E_A^*$ are such that
$\a(\om)\om\b(\om)=\a(\tau)\tau\b(\tau)$. If $|\a(\om)|=|\a(\tau)|$, then
$\a(\om)\om\b(\om)=\a(\tau)\tau\b(\tau)$ forces $\a(\om)=\a(\tau)$.
This in turn imposes that $\om\b(\om)=\tau\b(\tau)$. Without loss of generality,
we may assume that $|\om|\leq|\tau|$. Then
$\tau=\om\star\b(\om)|_{|\tau|-|\om|}$. There are at most
$\l$ such $\tau$ since $\b(\cdot)\in W\subset\bigcup_{k=1}^\lambda E_A^k$.
As
$\a(\cdot)\in W\subset\bigcup_{k=1}^\lambda E_A^k$, there are at most $\l$ of
the $\a(\cdot)$'s that are of different lengths, and for each of these there are at most
$\l$ preimages. Thus, the function
$\om\mapsto\a(\om)\om\b(\om)$ is at most $\lambda^2$-to-one.
Furthermore, notice that $\a(\om)\om\b(\om)\in\bigcup_{k=|\om|}^{2\lambda+|\om|}E_A^k$.
Moreover, recall that $|\om|_v\leq|\om|$ for every $\om\in E_v^*$. Then
\begin{eqnarray*}
\sum_{\om\in E_A^*}\|\varphi_\om'\|^t e^{-u|\om|}
&\leq&(K\rho^{-1})^{2t}\sum_{\om\in E_A^*}
      \|\varphi_{\a(\om)\om\b(\om)}'\|^t e^{-u|\om|} \\
&\leq&(K\rho^{-1})^{2t}e^{2\lambda u}\sum_{\om\in E_A^*}
      \|\varphi_{\a(\om)\om\b(\om)}'\|^t e^{-u|\a(\om)\om\b(\om)|} \\
&\leq&\lambda^2(K\rho^{-1})^{2t}e^{2\lambda u}\sum_{\tau\in E_v^*}\|\varphi_\tau'\|^t e^{-u|\tau|_v} \\
&<&\infty.
\end{eqnarray*}
The first inequality is a direct repercussion of the bounded distortion of the system.
The second inequality follows from the fact that $|\a(\om)\om\b(\om)|\leq2\lambda+|\om|$
for every $\om\in E_A^*$ and that $u>0$. The third inequality is a consequence
of the fact the function $\om\mapsto\a(\om)\om\b(\om)$ is at most $\lambda^2$-to-one,
that $|\tau|_v\leq|\tau|$ for every $\tau\in E_v^*$, and that $u>0$. Finally, the last
inequality follows from Theorem~2.1.3 in~\cite{gdms} since $u>P_{E_v}(t)$.
Since $\sum_{\om\in E_A^*}\|\varphi_\om'\|^t e^{-u|\om|}<\infty$, Theorem~2.1.3
in~\cite{gdms} affirms that $u>P(t)$. Since this is true for every
$u>\max\{P_{E_v}(t),0\}$, we deduce that $\max\{P_{E_v}(t),0\}\geq P(t)$.
Since this holds for every $v\in V$, we conclude that
$P(t)\leq\min_{v\in V}\max\{P_{E_v}(t),0\}$.

In particular, note that if $P(t)>0$, then $P(t)\leq\min_{v\in V}P_{E_v}(t)$.

Secondly, we show that $\max_{v\in V}P_{E_v}(t)\leq P(t)$ whenever $P(t)\leq0$.
Let $v\in V$. Let $t$ be such that $P(t)<0$ and $P(t)<u\leq0$. Then
$\sum_{\om\in E_A^*}\|\varphi_\om'\|^t e^{-u|\om|}<\infty$  according
to~Theorem~2.1.3 in~\cite{gdms} since $u>P(t)$.
Since $u\leq0$, we deduce that
\[
\sum_{\tau\in E_v^*}\|\varphi_\tau'\|^t e^{-u|\tau|_v}
\leq\sum_{\om\in E_A^*}\|\varphi_\om'\|^t e^{-u|\om|}<\infty.
\]
Thus, $P_{E_v}(t)\leq u$ by Theorem~2.1.3 in~\cite{gdms}. Since this holds
for every $P(t)<u\leq0$, we conclude that $P_{E_v}(t)\leq P(t)$
whenever $P(t)<0$.
The right-continuity of the pressure function ensures that $P_{E_v}(t)\leq P(t)$
if $P(t)=0$ for some $t$. Hence $P_{E_v}(t)\leq P(t)$ whenever $P(t)\leq0$. Since
the vertex $v$ was chosen arbitrarily, we conclude that
$\max_{v\in V}P_{E_v}(t)\leq P(t)$ whenever $P(t)\leq0$.

Thirdly, we prove that $P(t)\leq\max_{v\in V}P_{E_v}(t)$ for all $t\geq0$. To do
this, fix $t\geq0$. Let $W\subset E_A^*$ be a finite set witnessing the
irreducibility of $A$. Let $\rho:=\min\{\|\varphi_\tau'\|:\tau\in W\}$ and
$\lambda:=\max\{|\tau|:\tau\in W\}$. Then $\rho>0$ and $\lambda<\infty$.
For every $v\in V$ and every $e\in E$ there exist $\a_v(e),\b_v(e)\in W$ such
that $i(\a_v(e))=v$, $t(\b_v(e))=v$ and $\a_v(e)e\b_v(e)\in E_A^*$. Set
$\a_v(\om):=\a_v(\om_1)$ and $\b_v(\om):=\b_v(\om_{|\om|})$ for every $\om\in E_A^*$.
As previously, note that the function $\om\mapsto\a_v(\om)\om\b_v(\om)$ is at most $\lambda^2$-to-one
and that $\a_v(\om)\om\b_v(\om)\in\cup_{k=|\om|}^{2\lambda+|\om|}E_A^k$, that
is, every word $\om$ generates words $\a_v(\om)\om\b_v(\om)$ whose lengths are at most
$2\lambda+|\om|$ in the alphabet $E$ and thereby whose $v$-lengths, i.e. as a
concatenation of letters of the alphabets $E_v$, are at most $2\lambda+|\om|$.
Moreover, there is a vertex $v(\om)\in V$ such that the
word $\om$ visits $v(\om)$ at least $[|\om|/|V|]+1$, where $[\cdot]$
denotes the integer part function. This means that the $v(\om)$-length
of $\a_{v(\om)}(\om)\om\b_{v(\om)}(\om)$ is at least $[|\om|/|V|]$.
For each $v\in V$ and each $n\in\N$, let $[n/|V|]\leq k(v,n)\leq 2\lambda+n$ be such that
\[
\sum_{\tau\in E_v^{k(v,n)}}\|\varphi_\tau'\|^t
=\max_{[n/|V|]\leq k\leq 2\lambda+n} \sum_{\tau\in E_v^k}\|\varphi_\tau'\|^t.
\]
Thereafter, let $v(n)\in V$ be such that
\[
\sum_{\tau\in E_{v(n)}^{k(v(n),n)}}\|\varphi_\tau'\|^t
=\max_{v\in V} \sum_{\tau\in E_v^{k(v,n)}}\|\varphi_\tau'\|^t.
\]
Then for every $n\in\N$, we have
\begin{eqnarray*}
\sum_{\om\in E_A^n}\|\varphi_\om'\|^t
&\leq&(K\rho^{-1})^{2t}
     \sum_{\om\in E_A^n}\|\varphi_{\a_{v(\om)}(\om)\om\b_{v(\om)}(\om)}'\|^t \\
&\leq&\lambda^2(K\rho^{-1})^{2t}\sum_{v\in V}
      \sum_{\tau\in\bigcup_{k=[n/|V|]}^{2\lambda+n}E_v^k}\|\varphi_\tau'\|^t \\
&=&\lambda^2(K\rho^{-1})^{2t}\sum_{v\in V}
      \sum_{k=[n/|V|]}^{2\lambda+n}
      \sum_{\tau\in E_v^k}\|\varphi_\tau'\|^t \\
&\leq&\lambda^2(K\rho^{-1})^{2t}\sum_{v\in V}
      (2\lambda+n)\max_{[n/|V|]\leq k\leq 2\lambda+n} \sum_{\tau\in E_v^k}\|\varphi_\tau'\|^t \\
&=&\lambda^2(K\rho^{-1})^{2t}(2\lambda+n)\sum_{v\in V}\sum_{\tau\in E_v^{k(v,n)}}\|\varphi_\tau'\|^t \\
&\leq&\lambda^2(K\rho^{-1})^{2t}(2\lambda+n)|V|\max_{v\in V}\sum_{\tau\in E_v^{k(v,n)}}\|\varphi_\tau'\|^t \\
&=&\lambda^2(K\rho^{-1})^{2t}|V|(2\lambda+n)\sum_{\tau\in E_{v(n)}^{k(v(n),n)}}\|\varphi_\tau'\|^t.
\end{eqnarray*}
Since $|V|<\infty$, there exists $v\in V$ and a strictly increasing subsequence
$\{n_m\}_{m\in\N}$ of natural numbers such that $v(n_m)=v$ for all $m\in\N$.
Therefore
\begin{eqnarray*}
P(t)
&=&\lim_{m\to\infty}\frac{1}{n_m}\log\sum_{\om\in E_A^{n_m}}\|\varphi_\om'\|^t \\
&\leq&\lim_{m\to\infty}\frac{1}{n_m}\log\left(\lambda^2(K\rho^{-1})^{2t}|V|\right)
      +\lim_{m\to\infty}\frac{1}{n_m}\log(2\lambda+n_m)
      +\lim_{m\to\infty}\frac{1}{n_m}\log\hspace{-0.25cm}
       \sum_{\tau\in E_{v(n_m)}^{k(v(n_m),n_m)}}\hspace{-0.5cm}\|\varphi_\tau'\|^t \\
&=&0+0+\lim_{m\to\infty}\frac{1}{n_m}\log
       \sum_{\tau\in E_v^{k(v,n_m)}}\|\varphi_\tau'\|^t \\
&=&\lim_{m\to\infty}\frac{k(v,n_m)}{n_m}\frac{1}{k(v,n_m)}\log
       \sum_{\tau\in E_v^{k(v,n_m)}}\|\varphi_\tau'\|^t \\
&\leq&\lim_{m\to\infty}\frac{2\lambda+n_m}{n_m}\frac{1}{k(v,n_m)}\log
       \sum_{\tau\in E_v^{k(v,n_m)}}\|\varphi_\tau'\|^t \\
&=&\lim_{m\to\infty}\frac{2\lambda+n_m}{n_m}
      \cdot\lim_{m\to\infty}\frac{1}{k(v,n_m)}\log
       \sum_{\tau\in E_v^{k(v,n_m)}}\|\varphi_\tau'\|^t \\
&=&1\cdot P_{E_v}(t),
\end{eqnarray*}
where it is important to remember that
$\lim_{m\to\infty}k(v,n_m)\geq\lim_{m\to\infty}[n_m/|V|]=\infty$.
Thus, $P(t)\leq\max_{v\in V}P_{E_v}(t)$ for all $t\geq0$.

Taken together, the second and third parts allow us to deduce that
$P(t)=\max_{v\in V}P_{E_v}(t)$ when $P(t)\leq0$.
\end{proof}

The relationship between the pressures of the original and the associated systems ensures
that the limit sets of these systems have the same Hausdorff dimension.
\begin{cor}\label{hdimass}
If $\Phi$ is a CGDMS with a finitely irreducible matrix, then
$\theta_{E_v}\geq\theta$ and
$\dim_H(J_{E_v})=\dim_H(J_v)=\dim_H(J)$
for every vertex $v\in V$.
\end{cor}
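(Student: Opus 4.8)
The plan is to read everything off from Theorem~\ref{pressass} together with Bowen's formula, which for a finitely irreducible conformal GDMS $S$ asserts $\dim_H(J_S)=\inf\{t\geq0:P_S(t)\leq0\}$ (see \cite{gdms}), and from the obvious inclusions $J_{E_v}\subseteq J_v\subseteq J$. Here $\theta=\inf\{t\geq0:P(t)<\infty\}$ and $\theta_{E_v}=\inf\{t\geq0:P_{E_v}(t)<\infty\}$ are the finiteness parameters of $\Phi$ and of $\Phi_v$. A preliminary point is that Bowen's formula is available for the associated system $\Phi_v$ as well: the matrix $A^{(v)}$ inherits finite irreducibility from $A$, because if $\om,\overline{\om}\in E_v$ and $w$ is a word from a fixed finite set $W\subset E_A^*$ witnessing the finite irreducibility of $A$ with $\om_{|\om|}\,w\,\overline{\om}_1\in E_A^*$, then $w$ is necessarily an $A$-admissible loop at $v$, hence decomposes into first-return loops, and only the finitely many words arising from $W$ occur in this way.

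To prove $\theta_{E_v}\geq\theta$ it suffices to show $\{t\geq0:P_{E_v}(t)<\infty\}\subseteq\{t\geq0:P(t)<\infty\}$ and then take infima. So suppose $P_{E_v}(t)<\infty$. If we had $P(t)=+\infty$, then in particular $P(t)>0$, so Theorem~\ref{pressass} would give $P(t)\leq P_{E_v}(t)<\infty$, a contradiction; hence $P(t)<\infty$, as needed.

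For the dimension equalities, put $h:=\dim_H(J)$ and $h_v:=\dim_H(J_{E_v})$. Since $J_{E_v}\subseteq J$, monotonicity of Hausdorff dimension gives $h_v\leq h$. Conversely, fix $t<h$; by Bowen's formula for $\Phi$ we have $P(t)>0$, so Theorem~\ref{pressass} yields $P_{E_v}(t)\geq P(t)>0$, and Bowen's formula for $\Phi_v$ then gives $h_v\geq t$. Letting $t\uparrow h$ gives $h_v\geq h$, so $h_v=h$. Chaining this with $J_{E_v}\subseteq J_v\subseteq J$ and monotonicity, $h=h_v\leq\dim_H(J_v)\leq h$, whence $\dim_H(J_{E_v})=\dim_H(J_v)=\dim_H(J)$. (Alternatively, $\dim_H(J_{E_v})=\dim_H(J_v)$ follows from Proposition~\ref{jivjv} together with the fact that a conformal GDMS limit set has the same Hausdorff dimension as its closure.)

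The only step needing genuine care is the preliminary verification that $A^{(v)}$ is finitely irreducible, so that Bowen's formula genuinely applies to $\Phi_v$; once that is secured, the rest is bookkeeping with the monotone, right-continuous pressure functions (in particular near the critical value $t=h$) and the pressure comparison already established in Theorem~\ref{pressass}. I do not anticipate any obstacle beyond this.
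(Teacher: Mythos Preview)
Your proof is correct and follows essentially the same route as the paper's: both deduce the dimension equality from Bowen's formula combined with the pressure comparison of Theorem~\ref{pressass}. The one minor difference is that for the inequality $\dim_H(J_{E_v})\leq\dim_H(J)$ the paper invokes the \emph{second} clause of Theorem~\ref{pressass} (namely $P_{E_v}(t)\leq P(t)<0$ whenever $t>\dim_H(J)$), whereas you obtain it directly from the set inclusion $J_{E_v}\subseteq J$; either works. Your explicit verification that $A^{(v)}$ inherits finite irreducibility from $A$ is a welcome addition --- the paper tacitly applies Bowen's formula to $\Phi_v$ without spelling this out in the corollary's proof.
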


\begin{proof}
Fix $v\in V$. If $t<\dim_H(J)$, then $P(t)>0$ and hence we deduce from the first part of
Theorem~\ref{pressass} that $P_{E_v}(t)>0$. If $t>\dim_H(J)$, then $P(t)<0$ and hence we
deduce from the second part of Theorem~\ref{pressass} that $P_{E_v}(t)<0$. Thus,
$\dim_H(J_{E_v})=\dim_H(J)=\dim_H(J_v)$.

Similarly, if $t<\theta$ then $P(t)=\infty>0$ and hence we deduce from the first part of
Theorem~\ref{pressass} that $P_{E_v}(t)\geq P(t)=\infty$. Thus, $t\leq\theta_{E_v}$. Since this
is true for all $t<\theta$, we conclude that $\theta\leq\theta_{E_v}$.
\end{proof}

The relationship between the pressures further indicates that the original and the
associated systems sometimes have similar natures.
Before our next corolarry, we recall several definitions from \cite{gdms}.
\begin{defn}
A CGDMS is strongly regular iff there exists $t \geq 0$ such that $0 < P(t) < \infty$.
\end{defn}
\begin{defn}
If a CGDMS $S$ is not regular,we call it irregular.
\end{defn}
\begin{defn}
A CGDMS $S$ is called critically regular if $P(\theta)=0$.
\end{defn}
\begin{defn}
A CGDMS $S$ is absolutely regular if every non-empty subsystem is regular.
\end{defn}
\begin{cor}\label{natureass}
Let $\Phi$ be a CGDMS with a finitely irreducible matrix. Then we have the following.
\begin{itemize}
\item If $\Phi$ is strongly regular, then each $\Phi_v$ may have any nature;
\item If $\Phi$ is critically regular, then $\Phi_v$ is either critically regular or irregular
      and $\theta_{E_v}=\theta$ for each $v$;
\item If $\Phi$ is irregular, so is every $\Phi_v$ and $\theta_{E_v}=\theta$ for each $v$.
\end{itemize}
\end{cor}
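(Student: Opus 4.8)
The plan is to re-express each notion of ``nature'' in terms of the pressure function $P$ and the finiteness parameter $\theta$, and then to let Theorem~\ref{pressass} and Corollary~\ref{hdimass} do the work. Write $h:=\dim_H(J)$. For a finitely irreducible system one always has $\theta\leq h$, and the three natures correspond to: strongly regular $\iff\theta<h$; critically regular $\iff\theta=h$ and $P(\theta)=0$; irregular $\iff\theta=h$ and $P(\theta)\neq0$ (with $P(\theta)=+\infty$ allowed). That these exhaust the possibilities — in particular that $\theta<h$ forces $P(h)=0$, hence regularity — follows from monotonicity of $P$, its continuity where finite, and the right-continuity of $P$ already invoked in the proof of Theorem~\ref{pressass}. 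The same dictionary applies to each $\Phi_v$, with $h_{E_v}:=\dim_H(J_{E_v})$ and $\theta_{E_v}$ in place of $h$ and $\theta$.

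The first step is to record the chain $\theta\leq\theta_{E_v}\leq h_{E_v}=h$, valid for every $v\in V$: the outer equality and the first inequality are exactly Corollary~\ref{hdimass}, and $\theta_{E_v}\leq h_{E_v}$ is the general fact that the finiteness parameter of a CIFS never exceeds the Hausdorff dimension of its limit set.

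For the irregular case, $\Phi$ irregular gives $\theta=h$, so the chain collapses to $\theta_{E_v}=\theta=h=h_{E_v}$; in particular $\Phi_v$ is not strongly regular, and the assertion $\theta_{E_v}=\theta$ is already proved. To see that $\Phi_v$ is actually irregular rather than critically regular one must show $P_{E_v}(\theta_{E_v})=P_{E_v}(\theta)\neq0$: if $P(\theta)>0$, the first part of Theorem~\ref{pressass} gives $P_{E_v}(\theta)\geq\min_{w\in V}P_{E_w}(\theta)\geq P(\theta)>0$; if $P(\theta)<0$, the second part gives $P_{E_v}(\theta)\leq\max_{w\in V}P_{E_w}(\theta)=P(\theta)<0$. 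Either way $P_{E_v}(\theta)\neq0$, so $\Phi_v$ is irregular. The critically regular case is the same but easier: $P(\theta)=0$ again forces $\theta=h$, hence $\theta_{E_v}=\theta=h=h_{E_v}$ and $\Phi_v$ is not strongly regular, so it is critically regular or irregular; moreover the second part of Theorem~\ref{pressass} yields $\max_{v\in V}P_{E_v}(\theta)=P(\theta)=0$, so at least one $\Phi_v$ is genuinely critically regular although others may fail to be.

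The only bullet requiring real construction is the first. When $\Phi$ is strongly regular ($\theta<h$), the chain $\theta\leq\theta_{E_v}\leq h$ leaves $\theta_{E_v}$ free within $[\theta,h]$, and when $\theta_{E_v}$ equals $h$ the value $P_{E_v}(h)$ is constrained only to be $\geq0$; thus all three natures are a priori consistent with Theorem~\ref{pressass}, and one must exhibit examples realising each. Taking $\Phi$ to be an IFS (so $\Phi_v=\Phi$) gives a strongly regular $\Phi_v$. For the other two I would build a two-vertex CGDMS in which every first-return loop at $v$ is forced to make a long excursion through the second vertex: the extra contraction accumulated along these excursions raises the exponent of convergence of $\sum_{\tau\in E_v^*}\|\varphi_\tau'\|^t$ up to $h$, and the family of returning edges can then be tuned so that $\sum_{\tau\in E_v^*}\|\varphi_\tau'\|^{h}$ converges (making $\Phi_v$ critically regular) or diverges (making it irregular), all while keeping $\Phi$ itself strongly regular. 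I expect this construction — specifically, certifying that $\theta_{E_v}$ can be driven all the way up to $h$ without losing strong regularity of $\Phi$ — to be the main obstacle; the remainder of the corollary is purely formal given Theorem~\ref{pressass} and Corollary~\ref{hdimass}.
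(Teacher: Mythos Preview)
The paper states Corollary~\ref{natureass} without proof, leaving it as an immediate consequence of Theorem~\ref{pressass} and Corollary~\ref{hdimass}; your deduction via the chain $\theta\leq\theta_{E_v}\leq h_{E_v}=h$ together with the two pressure comparisons is exactly the intended argument, correctly spelled out. For the first bullet you rightly observe that explicit examples must be constructed and sketch a plausible two-vertex scheme; the paper does not supply such examples either (the examples following the corollary concern strict pressure inequalities rather than the nature of $\Phi_v$), so your treatment is already at least as complete as the original.
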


The relationship between the pressures also reveals that (at least) one of the
associated systems eventually has the same pressure as the original system.

\begin{cor}
There is some $v\in V$ such that $P(t)=P_{E_v}(t)$ for all $t\geq\dim_{H}(J)$.
\end{cor}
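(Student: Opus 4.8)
The plan is to combine Theorem~\ref{pressass} in the regime $P(t)\le 0$ with the finiteness of $V$ and the real-analyticity of pressure functions. For every $t\ge\dim_H(J)$ we have $P(t)\le 0$, since $P$ is non-increasing and right-continuous and $\dim_H(J)=\inf\{s\ge 0:P(s)\le 0\}$; hence Theorem~\ref{pressass} yields
\[
P(t)=\max_{v\in V}P_{E_v}(t)\qquad\text{for every }t\ge\dim_H(J).
\]
Also $\theta\le\dim_H(J)$ (the pressure is infinite below $\theta$), so $P$ is finite and, by \cite{gdms}, real-analytic on the open interval $I:=(\dim_H(J),\infty)$; and since $\max_{v\in V}P_{E_v}(t)=P(t)<\infty$ on $I$, we get $\theta_{E_v}\le\dim_H(J)$ for each $v$. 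As each associated system $\Phi_v$ is again a finitely irreducible conformal system (indeed a CIFS; cf.\ \cite{gdms}), every $P_{E_v}$ is likewise finite and real-analytic on $I$.

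Now I would argue by pigeonhole on $I$. For each $v\in V$ put $I_v:=\{t\in I:P(t)=P_{E_v}(t)\}$. Each $I_v$ is closed in $I$ (continuity of $P$ and $P_{E_v}$ on $I$), and $\bigcup_{v\in V}I_v=I$ by the displayed identity. Since $I$ is a Baire space and $V$ is finite, some $I_{v_0}$ has non-empty interior, so $P$ and $P_{E_{v_0}}$ coincide on a non-degenerate open subinterval of $I$; by the identity theorem for real-analytic functions on the connected set $I$, they coincide on all of $I$. Finally, both $P$ and $P_{E_{v_0}}$ are right-continuous, so letting $t\downarrow\dim_H(J)$ gives $P(\dim_H(J))=P_{E_{v_0}}(\dim_H(J))$ as well. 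Hence $P(t)=P_{E_{v_0}}(t)$ for all $t\ge\dim_H(J)$.

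The only substantive input is the real-analyticity of the pressure functions on the interiors of their finiteness regions, which I would cite from \cite{gdms} (and which is why one must recall there that $\Phi_v$ inherits finite irreducibility from $\Phi$); the rest is a soft Baire-category argument. If one prefers to avoid Baire, one can instead observe that one of the finitely many closed sets $I_v$ has positive Lebesgue measure, hence an accumulation point in $I$, and apply the identity theorem there. I do not anticipate difficulty in any remaining step --- the conceptual point is just that analyticity of the \emph{single} function $P$ forbids the kink that an upper envelope of finitely many analytic functions would otherwise exhibit when it switches branches.
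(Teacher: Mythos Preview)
Your proof is correct and follows essentially the same route as the paper's: invoke Theorem~\ref{pressass} to get $P(t)=\max_{v\in V}P_{E_v}(t)$ for $t\ge\dim_H(J)$, use finiteness of $V$ together with real-analyticity to find a subinterval on which $P$ agrees with some $P_{E_{v_0}}$, and then apply the identity theorem. Your version is in fact more careful than the paper's, which simply asserts the existence of such an interval without spelling out the Baire/measure argument and does not explicitly treat the endpoint $\dim_H(J)$ via right-continuity.
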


\begin{proof} Theorem~\ref{pressass} affirms that $P(t)=\max_{v\in V}P_{E_v}(t)$
for all $t\geq\dim_{H}(J)$. Since all the pressure functions $P_{E_v}(t)$, $v\in V$,
and $P(t)$ are real-analytic, there is $v\in V$ and an interval
$I\subset[\dim_{H}(J),\infty)$ such that $P(t)=P_{E_v}(t)$ for all $t\in I$.
The real-analyticity then ensures that $P(t)=P_{E_v}(t)$ for all $t\geq\dim_{H}(J)$.
\end{proof}

However, the following example shows that $P(t)=P_{E_v}(t)$ on $[\dim_H(J),\infty)$
may not hold for all $v\in V$.
\begin{example}
Let $\Phi$ be a SGDS (that is, a CGDS whose generators are all similarities)
whose set of vertices is $V=\{v,w,z\}$ and whose set of edges is $E=\{1,2,3,4\}$,
where
\begin{eqnarray*}
i(1)=v, & t(1)=w & \\
i(2)=w, & t(2)=z & \\
i(3)=z, & t(3)=v & \\
i(4)=z, & t(4)=w. &
\end{eqnarray*}
Observe that
\[
E_v=\left\{1(24)^n23:n\geq0\right\},
\]
\[
E_w=\left\{231,24\right\},
\]
and
\[
E_z=\left\{312,42\right\}.
\]
Let $r:=|\varphi_{24}'|=|\varphi_{42}'|$ and
$s:=|\varphi_{123}'|=|\varphi_{231}'|=|\varphi_{312}'|$.
Because all the generators are similarities,
\[
P_{E_v}(t)
=\log\sum_{\tau\in E_v}|\varphi_\tau'|^t
=\log\left(|\varphi_{123}'|^t\sum_{n=0}^\infty|\varphi_{24}'|^{nt}\right)
=\log\left(s^t\sum_{n=0}^\infty r^{nt}\right)
=\log\left(\frac{s^t}{1-r^t}\right),
\]
whereas
\[
P_{E_w}(t)=P_{E_z}(t)
=\log\left(|\varphi_{123}'|^t+|\varphi_{24}'|^t\right)
=\log(s^t+r^t).
\]
The Hausdorff dimension of the limit sets of the original system and the
associated systems is the unique $h>0$ such that $s^h+r^h=1$. When $t<h$,
we have $s^t+r^t>1$ and it follows that
$P_{E_v}(t)>P_{E_w}(t)=P_{E_z}(t)\geq P(t)$.
When $t>h$, we have $s^t+r^t<1$ and it ensues that
$P_{E_v}(t)<P_{E_w}(t)=P_{E_z}(t)=P(t)$. Since $P(t)$ and $P_{E_z}(t)$
are real-analytic functions which coincides on a non-empty interval,
they must coincide everywhere on their finiteness set, which is $[0,\infty)$.
We conclude that $P(t)=\log(s^t+r^t)$ for all $t\geq0$.
\end{example}

We shall now show that the inequality $P(t)\leq\min_{v\in V}P_{E_v}(t)$
when $t<\dim_H(J)$ may be strict. Indeed, there exist finite CGDSs
with (finitely) irreducible matrices whose associated systems $\Phi_v$
are all infinite. Then $P(0)<\infty=\min_{v\in V}P_{E_v}(0)$. Moreover,
$P(t)<\infty=\min_{v\in V}P_{E_v}(t)$ for all
$t\in[0,\min_{v\in V}\theta_{E_v})$.
The strict inequality
$P(t)<\min_{v\in V}P_{E_v}(t)$ extends to the right
of $\min_{v\in V}\theta_{E_v}$ in some cases.

\begin{example}
Take any SGDS which consists of
three vertices and one edge in each direction between every pair of vertices.
Such a finite system has finite pressure. However, each of its associated systems
is infinite and absolutely regular. To be more precise, let $V=\{v,w,z\}$ be the
set of vertices. Let $E=\{a,b,c,d,e,f\}$ be the set of edges, where
\begin{eqnarray*}
i(a)=v, & t(a)=w & \\
i(b)=w, & t(b)=v & \\
i(c)=w, & t(c)=z & \\
i(d)=z, & t(d)=w & \\
i(e)=v, & t(e)=z & \\
i(f)=z, & t(f)=v. &
\end{eqnarray*}
Because of the obvious symmetry, we may concentrate our efforts
on any given vertex, say $v$.
Observe that
\[
E_v=\left\{a(cd)^nb,a(cd)^ncf,e(dc)^nf,e(dc)^ndb:n\geq0\right\}.
\]
As all the generators are similarities, we obtain
\begin{eqnarray*}
P_{E_v}(t)
=\log\sum_{\tau\in E_v}|\varphi_\tau'|^t
&=&\log\left(
     \left(|\varphi_{ab}'|^t+|\varphi_{acf}'|^t+|\varphi_{ef}'|^t+|\varphi_{edb}'|^t\right)
     \sum_{n=0}^\infty|\varphi_{cd}'|^{nt}
     \right) \\
&=&\log\left(|\varphi_{ab}'|^t+|\varphi_{acf}'|^t+|\varphi_{ef}'|^t+|\varphi_{edb}'|^t\right)
   +\log\frac{1}{1-|\varphi_{cd}'|^t}.
\end{eqnarray*}
In particular, this shows that every associated system is absolutely regular, i.e.
$\theta_{E_v}=0$ for all $v\in V$.
The continuity of the pressure functions then asserts that there is some interval
$[0,L)$, with $L>0$, on which $P(t)<\min_{v\in V}P_{E_v}(t)$ for every $t<L$.
\end{example}

All of the above examples show that Theorem~\ref{pressass} is the best general
result one can achieve.

\begin{rem}
Section~4.7 in~\cite{gdms} contains some inaccuracies. First, the system generated
by a strictly Markov system is strictly Markov, and thus not an iterated function system,
as claimed. Moreover, the proof of Theorem~4.7.4 in~\cite{gdms} contains a mistake.
Indeed, the correct argument is:
By Proposition~\ref{jivjv}, we have $\dim_H(J_{E_v})\leq\dim_H(J_v)
=\dim_H(J)$ for every vertex $v\in V$. Since $\Phi_v$ and $\Phi$ are irreducible,
Theorem~4.2 in~\cite{AGGDMS} shows that to prove that
$\dim_H(J)\leq\dim_H(J_{E_v})$ it suffices to demonstrate that
$P(t)\leq\max\{P_{E_v}(t),0\}$ for every $t\geq0$ and every $v\in V$. To do this,
fix $v\in V$ and $t\geq0$. Let $W\subset E_A^q$ be a set witnessing the finite
primitivity of $A$. Then for every $e\in E$ there exist $\a(e),\b(e)\in W$ such
that $i(\a(e))=v$, $t(\b(e))=v$ and $\a(e)e\b(e)\in E_A^*$. Set
$\a(\om):=\a(\om_1)$ and $\b(\om):=\b(\om_{|\om|})$ for every $\om\in E_A^*$.
Let $u>\max\{P_{E_v}(t),0\}$.
Then
\begin{eqnarray*}
\sum_{\om\in E_A^*}\|\varphi_\om'\|^t e^{-u|\om|}
&\leq&
     (K\rho)^{2t}
     \sum_{\om\in E_A^*}\|\varphi_{\a(\om)\om\b(\om)}'\|^t e^{-u|\om|} \\
&=&
     (K\rho)^{2t}e^{2qu}
     \sum_{\om\in E_A^*}\|\varphi_{\a(\om)\om\b(\om)}'\|^t e^{-u|\a(\om)\om\b(\om)|} \\
&\leq&
   (K\rho)^{2t}e^{2qu}
     \sum_{\tau\in E_v^*}\|\varphi_\tau'\|^t e^{-u|\tau|} \\
&\leq&
     (K\rho)^{2t}e^{2qu}
     \sum_{\tau\in E_v^*}\|\varphi_\tau'\|^t e^{-u|\tau|_v} \\
&<&\infty,
\end{eqnarray*}
where the second inequality sign follows from the facts that
$\a(\om)\om\b(\om)\in E_v^*$ and the function
$\om\mapsto\a(\om)\om\b(\om)$ is one-to-one; the third inequality sign follows
from the facts that $u>0$ and $|\tau|_v\leq|\tau|$; the last inequality follows
from Theorem~4.2 in~\cite{AGGDMS} since $u>P_{E_v}(t)$. The resulting inequality
$\sum_{\om\in E_A^*}\|\varphi_\om'\|^t e^{-u|\om|}<\infty$ implies that $u>P(t)$
according to Theorem~4.2 in~\cite{AGGDMS}.
Since this is true for every $u>\max\{P_{E_v}(t),0\}$, we deduce that
$\max\{P_{E_v}(t),0\}\geq P(t)$. This implies $\dim_H(J_{E_v})\geq\dim_H(J)$.

Though this argument confirms the equality of the Hausdorff dimensions of the
limit sets of the original and its associated systems, it does not provide
as strong information about their pressures as in Theorem~\ref{pressass}.
\end{rem}


\end{document}